\newtheorem{theo}{Theorem}[section]
\newtheorem{lemm}[theo]{Lemma}
\newtheorem{prop}[theo]{Proposition}
\newtheorem{coro}[theo]{Corollary}
\theoremstyle{definition}
\newtheorem{defi}[theo]{Definition}
\newtheorem{rem}[theo]{Remark}
\newtheorem*{theo*}{Theorem}
\numberwithin{equation}{section}
\newcommand{\cat}{\mathbf}
\newcommand{\id}{\mathrm{id}}
\newcommand{\Map}{\on{Map}}
\newcommand{\Hom}{\mathsf{Hom}}
\newcommand{\on}{\operatorname}
\renewcommand{\P}{\mathbf{P}}
\newcommand{\A}{\mathbf{A}}
\newcommand{\C}{\mathbf{C}}
\newcommand{\Q}{\mathbf{Q}}
\newcommand{\Z}{\mathbf{Z}}
\newcommand{\N}{\mathbf{N}}
\newcommand{\F}{\mathbf{F}}
\newcommand{\M}{\mathrm{M}}
\renewcommand{\H}{\mathrm{H}}
\newcommand{\MZ}{\mathsf{M}\Z}
\newcommand{\DM}{\mathbf{DM}}
\newcommand{\DMT}{\mathbf{DMT}}
\newcommand{\D}{\mathbf{D}}
\newcommand{\MS}{\mathbf{MS}}
\renewcommand{\S}{\mathbf{S}}
\newcommand{\gr}{\mathrm{gr}}
\renewcommand{\lim}{\mathop{\mathrm{lim}}}
\title{Motivic homological stability for configuration spaces of the line}
\author{Geoffroy Horel}
\begin{document}

\address{Max Planck Institute for Mathematics\\ Vivatsgasse 7\\ 53111 Bonn\\ Deutschland}
\email{geoffroy.horel@gmail.com}
\keywords{homological stability, configuration spaces, motivic cohomology, t-structure}
\subjclass[2010]{14F42,18E30,55R80}

\begin{abstract}
We lift the classical theorem of Arnol'd on homological stability for configuration spaces of the plane to the motivic world. More precisely, we prove that the schemes of unordered configurations of points in the affine line satisfy stability with respect to the motivic t-structure on mixed Tate motives.
\end{abstract}

\maketitle

\tableofcontents

\section{Introduction}

There are many examples in mathematics of a sequence of spaces
\[X_1\to X_2\to \ldots\to X_n\to\ldots\]
such that for any integer $i$, the induced sequence on homology groups
\[\H_i(X_1,\Z)\to \H_i(X_2,\Z)\to\ldots\to \H_i(X_n,\Z)\to\ldots\]
eventually becomes constant. Such a phenomenon is called homological stability. Let us mention a few examples: homological stability holds for the classifying spaces of the symmetric groups $\Sigma_n$ (Nakaoka), the classifying space of the mapping class groups $\Gamma_{g,1}$ of a surface of genus $g$ with one boundary component (Harer), the classifying space of automorphisms of free groups $\mathrm{Aut}(F_n)$ (Hatcher--Vogtmann), the space of unordered configurations of points in an open manifold (McDuff \cite{mcduffconfiguration}).

This paper is concerned with the case of configurations of points in the plane. In that case McDuff reduces to a theorem of Arnol'd of homological stability for classifying spaces of braid groups. Indeed, it is a classical fact that the space of unordered configurations of $n$ points in the plane is a model for the classifying space of the braid group on $n$ strands. There is a stabilizing map $B_n\to B_{n+1}$ which adjoins a trivial strand on the side of a braid with $n$-strands. The stability theorem for the braid groups is then given by the following theorem.

\begin{theo*}[Arnol'd, \cite{arnoldtopological}]\label{theo: arnold}
The stabilizing map induces an isomorphism
\[\H^i(B_{n+1},\Z)\cong \H^i(B_{n},\Z)\]
when $i<\lfloor{n/2}\rfloor+2$. 
\end{theo*}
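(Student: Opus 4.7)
The plan is to argue by induction on $n$. Write $C_n:=F(\R^2,n)/\Sigma_n$ for the unordered configuration space, which is a $K(B_n,1)$, and let $f\colon C_n\to C_{n+1}$ be the stabilization realized by appending a point at infinity. The strategy is to factor $f$ through an intermediate configuration space $E_n$ and control each factor by a classical fibration argument.

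Set $E_n:=F(\R^2,n+1)/\Sigma_n$, where $\Sigma_n$ acts on the first $n$ coordinates. Forgetting the last coordinate gives a fibration $p\colon E_n\to C_n$ whose fiber is $\R^2$ minus $n$ points, a wedge of $n$ circles with braid monodromy; $p$ admits a canonical section $s$ that adjoins a point at infinity. On the other hand, merging the distinguished point back into the configuration yields an $(n+1)$-sheeted covering $q\colon E_n\to C_{n+1}$, and one checks $f=q\circ s$.

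Because the fiber of $p$ has cohomology concentrated in degrees $0$ and $1$, the Serre spectral sequence collapses into a long exact sequence
\[
\cdots\to\H^{i-1}(C_n;\mathcal{H}^1)\to\H^i(E_n)\to\H^i(C_n)\to\H^i(C_n;\mathcal{H}^1)\to\cdots
\]
in which $\mathcal{H}^1$ is the local system on $C_n$ whose stalk is $\H^1$ of the fiber. The section $s$ splits the surjection onto $\H^i(C_n)$, so $s^*$ is surjective with kernel exactly $\H^{i-1}(C_n;\mathcal{H}^1)$. For the covering $q$, the composition of the transfer $\tau$ with $q^*$ is multiplication by $n+1$, so after inverting $n+1$ the pullback $q^*$ realizes $\H^*(C_{n+1})$ as a direct summand of $\H^*(E_n)$. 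Combining the two steps, the failure of $f^*$ to be an isomorphism in degree $i$ is measured by the twisted groups $\H^{*}(C_n;\mathcal{H}^1)$ together with the symmetric-group contribution from the covering.

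The crux, and the main obstacle, is bounding $\H^i(C_n;\mathcal{H}^1)$ in the stability range. The local system $\mathcal{H}^1$ is the permutation representation $\Z^n$ of $B_n$ (factoring through $B_n\twoheadrightarrow\Sigma_n$), which is induced from the trivial representation of the stabilizer of one strand, a subgroup whose classifying space is $F(\R^2,n)/\Sigma_{n-1}\simeq C_{n-1}(\R^2\setminus\{*\})$. Shapiro's lemma therefore gives $\H^i(C_n;\mathcal{H}^1)\cong\H^i(C_{n-1}(\R^2\setminus\{*\});\Z)$, coupling the induction with a parallel stability statement for configurations in the once-punctured plane. I would strengthen the inductive hypothesis to carry both in parallel; the arithmetic range $i<\lfloor n/2\rfloor+2$ should then fall out of the single-index shift $n\mapsto n-1$ at each inductive step. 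The main subtlety is arranging everything integrally rather than merely after inverting $n+1$, which requires more care than the rational transfer argument suggests and is where the sharpness of the range is determined.
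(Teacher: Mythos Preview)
The paper does not prove this theorem. Arnol'd's result is quoted in the introduction as a classical fact, with a citation to \cite{arnoldtopological}, and is then \emph{used} as an input in the proof of the main motivic theorem (Theorem~\ref{theo: main bis}): the line ``we are in the stable range for $\H_*(C_d(\C),\Z)$'' invokes precisely this statement. So there is no proof in the paper to compare your proposal to; your sketch is an attempt to supply what the paper takes as a black box.

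As for the sketch itself, the architecture is reasonable and is one of the standard routes (Fadell--Neuwirth fibration plus covering-space transfer), but you have correctly identified, and not resolved, the essential obstruction. The transfer for the $(n+1)$-sheeted cover $q$ only tells you that $q^*$ is split injective after inverting $n{+}1$; it says nothing integrally, and the torsion in $\H^*(B_n,\Z)$ is exactly where the content lies. Your plan to carry an auxiliary stability statement for $C_{n-1}(\R^2\setminus\{*\})$ in parallel via Shapiro's lemma is the right move for controlling $\H^*(C_n;\mathcal{H}^1)$, but that alone does not close the gap for $q^*$: you still need an independent argument that $q^*$ (or equivalently $f^*$) is an isomorphism integrally in the claimed range, and the transfer cannot provide it. Arnol'd's original argument proceeds instead by an explicit computation of the cohomology via a cell decomposition, which is how the integral statement with the sharp bound is obtained. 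As written, your proposal establishes the result with $\Z[1/(n{+}1)!]$ coefficients (by descending induction) but leaves the integral statement open.
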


The purpose of the present paper is to prove a motivic version of this result. In order to state our main theorem the first step is to construct algebro-geometric versions of the space of configurations in the plane. These are given by a sequence of schemes $C_n$ whose points over a commutative ring $R$ are the square-free degree $n$ monic polynomials over $R$. This choice is justified by the fact that the complex points of $C_n$ with the usual complex topology recovers the topological space $C_n(\mathbf{R}^2)$ of unordered configurations in the plane. Our motivic homological stability result has the following form.

\begin{theo*}[\ref{coro: proof of main}]\label{theo: main}
The motivic cohomology groups $\H^i(C_n,\Z(q))$ are independent of $n$ as long as $i<\lfloor{n/2}\rfloor+2$ and $n$ is at least $3$.
\end{theo*}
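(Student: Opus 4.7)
The plan is to establish the stability by induction on $n$, using a Gysin distinguished triangle in $\DMT(k)$ to recursively express $\M(C_n)$ and then reading the stability bound off the motivic $t$-structure. The idea is that the natural stratification of the space of monic squarefree polynomials according to vanishing at a fixed point decomposes $\M(C_{n+1})$ into pieces of strictly smaller ``effective size'', and iterating this produces the numerical bound $i < \lfloor n/2\rfloor + 2$.

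Concretely, I would consider the smooth divisor $Z \subset C_{n+1}$ cut out by $p(0)=0$. Writing such a polynomial as $p(x) = x\cdot q(x)$ identifies $Z$ with the open subscheme $U_n := \{q \in C_n : q(0) \neq 0\} \subset C_n$, so codimension-one Gysin in $\DMT(k)$ yields the distinguished triangle
\[\M(V_{n+1}) \longrightarrow \M(C_{n+1}) \longrightarrow \M(U_n)(1)[2] \xrightarrow{+1},\]
where $V_{n+1} := C_{n+1}\setminus Z$ is the configuration scheme of $n+1$ points in $\mathbb{G}_m$. Applying the same decomposition to $C_n$ itself gives an analogous triangle $\M(U_n)\to \M(C_n) \to \M(U_{n-1})(1)[2]$, and together these couple the motives $\M(C_m), \M(U_m), \M(V_m)$ for all $m \leq n+1$ into a single recursion.

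The remaining geometric task is to relate $\M(V_{n+1})$ back to lower $\M(C_m)$. The scheme $V_{n+1}$ carries a free $\mathbb{G}_m$-action by scaling $x \mapsto tx$, which should decompose its motive as an extension of $\M(V_{n+1}/\mathbb{G}_m)$ by a Tate twist of itself, and a further translation action of $\mathbb{G}_a$ on the quotient reduces the dimension, one hopes identifying the result with a lower configuration space. Once this bridge is in place, the numerical input is nearly formal: each Tate twist $(1)[2]$ raises cohomological $t$-structure degree by $2$, so $k$ iterations of the recursion contribute only in degree $\geq 2k$, which explains the factor of $\lfloor n/2\rfloor$ in the range. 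The main obstacle is precisely this passage from $\M(V_{n+1})$ to $\M(C_n)$: since $\mathbb{G}_m$ is not motivically trivial (unlike $\A^1$), the motive of a configuration space in $\mathbb{G}_m$ differs from one in $\A^1$ by a controlled but nontrivial $\Z(1)[1]$-discrepancy that must be tracked carefully through the entire recursion before the motivic $t$-structure can deliver the final bound.
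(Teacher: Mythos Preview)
Your Gysin setup is correct, but the recursion does not close, and the two devices you propose to close it both fail. First, the $\mathbb{G}_m$-action on $V_{n+1}$ by scaling is \emph{not} free: whenever $k\mid (n+1)$ the configuration given by the $k$-th roots of unity (repeated in blocks) is fixed by every $k$-th root of unity in $\mathbb{G}_m$, so the quotient $V_{n+1}/\mathbb{G}_m$ is not even a scheme in general and no torsor decomposition of $\M(V_{n+1})$ is available. Second, there is no translation $\mathbb{G}_a$-action on configurations in $\mathbb{G}_m$, since translation does not preserve $\A^1\setminus\{0\}$; so the ``further reduction'' you allude to has no meaning. Once you observe that your $U_n$ and $V_n$ are literally the same scheme (configurations of $n$ points in $\mathbb{G}_m$), the two triangles you wrote down are a single recursion expressing $\M(C_n)$ in terms of $\M(V_n)$ and $\M(V_{n-1})(1)[2]$; nothing in your argument relates $\M(V_n)$ back to any $\M(C_m)$, so you are left with an infinite regress through configurations in $\A^1$ minus ever more points. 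Even granting a closed recursion, you would still need a mechanism to compare $t_{<l}\M(C_n)$ with $t_{<l}\M(C_{n+1})$, and there is none in sight: as the paper stresses, no stabilizing map $C_n\to C_{n+1}$ is known in the motivic world.

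The paper's proof is completely different and bypasses all of this. One constructs a scanning map $s_d:C_d\to F_d$ to the scheme of pointed degree-$d$ maps $\P^1\to\P^1$; the target carries a graded monoid structure (Cazanave) giving genuine maps $\alpha:F_d\to F_{d+1}$, hence a zig-zag $\M(C_d)\to\M(F_d)\to\M(F_{d+1})\leftarrow\M(C_{d+1})$. The key technical input is that Betti realization $B:\DMT(\Z)\to\D(\Z)$ is $t$-exact and conservative, so $t_{<l}(f)$ is an isomorphism in $\DMT(\Z)$ iff $B(f)$ is an isomorphism on $\H_{<l}$. This reduces everything to classical topology: Segal's theorem that $F_d(\C)\hookrightarrow\Omega^2_dS^2$ is a homology equivalence through degree $d$, and Arnol'd's stability theorem for $C_d(\C)$. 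No inductive motivic computation is ever performed.
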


The functor $\H^i(-,\Z(q))$ are the motivic cohomology groups of Levine constructed in \cite{levinektheory} using a generalization of the cycle complex of Bloch. For us, they will be defined as 
\[\H^i(X,\Z(q))\cong \DM(\Z)(\M(-),\Z(q)[i])\]
where $\DM(\Z)$ denotes Spitzweck category of motives over $\on{Spec}(\Z)$ and $\M(-)$ is the functor which assigns to a smooth scheme over $\Z$ its motive. We refer the reader to Section \ref{section: mixed motives} for more details about this category.

This theorem follows from another theorem (Theorem \ref{theo: main bis}) which we will roughly state by saying that the collection of motives $\M(C_n)$ has homological stability with respect to the motivic t-structure. The existence of a motivic t-structure on the whole triangulated category of motives is one of the most important open conjecture in the area. However, such a t-structure has been constructed when we restrict to the full triangulated subcategory of mixed Tate motives provided that the base satisfies the Beilinson--Soul\'e vanishing conjecture \cite{levinetate}. This last conjecture is satisfied for $\on{Spec}(\Z)$ and the motives $\M(C_n)$ are mixed Tate motives (\ref{C and F are mixed Tate}). 

Contrary to the classical case, we were unable to construct stabilizing maps $C_n\to C_{n+1}$ in the unstable (or even stable) motivic homotopy category inducing this isomorphism. The existence of such maps is related to the existence of a motivic model of the operad of little $2$-disks. Evidence for the existence of such an object is given by the action of the absolute Galois group of $\Q$ (resp. the Tannakian Galois group of $\cat{MTM}(\Z)$) on the profinite completion (resp. the rational completion) of the operad of little $2$-disks \cite{horelprofinite,fressehomotopy}.

Our strategy can be described as follows:
\begin{itemize}
\item[-] First, we construct a scanning map $C_n\to F_n$ where $F_n$ is the scheme of pointed degree $n$ maps from $\P^1$ to $\P^1$ (Section \ref{section: scanning}).
\item[-] We prove that the motives $\M(C_n)$ and $ \M(F_n)$ are mixed Tate motives (\ref{C and F are mixed Tate}).
\item[-] We show that the motives $\M(F_n)$ have motivic homological stability, in the sense that certain natural maps $\M(F_n)\to \M(F_{n+1})$ have a fiber whose connectivity tends to $\infty$. The connectivity is measured with respect to the motivic $t$-structure on mixed Tate motives (\ref{prop: stability for F}).
\item[-] We show that the  connectivity of the fiber of the map induced by the scanning map $\M(C_n)\to  \M(F_n)$ tends to $\infty$ with respect to the motivic $t$-structure on mixed Tate motives (\ref{theo: main bis}).
\item[-] The previous two facts together imply the desired theorem.
\end{itemize}
This strategy should be compared to the one used by Bendersky and Miller in \cite{benderskylocalization} in the case of configuration spaces in a closed manifold. In that case the stabilization maps are also unavailable and the authors study homological stability using the scanning map.

By definition of the category of motives, this theorem implies homological stability for the Betti cohomology of $C_n\times_{\Z}\Q$ (which is exactly Arnol'd theorem) but also for the \'etale, Hodge or de Rham cohomology of $C_n\times_{\Z}\Q$. For $C_n\times_{\Z}\F_p$ we also obtain a homological stability statement for \'etale cohomology with coefficients in $\Z_l$ with $l$ prime to $p$.

Whenever there is a homological stability result, the next step is to construct an object whose homology is given by the stable homology groups. Let us recall what is known in the topological case. We denote by $\Omega^2_0S^2$ the connected components of the constant map in the two-fold loop space $\Omega^2S^2$. 

\begin{theo*}[Segal, \cite{segalconfiguration}]\label{theo: Segal}
There is a map $C_n(\mathbf{R}^2)\to \Omega^2_0S^2$ which induces an isomorphism in homology in the stable range. 
\end{theo*}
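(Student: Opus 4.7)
The plan is to follow Segal's original strategy: construct an explicit scanning map from the configuration space to $\Omega^2 S^2$, identify the group completion of the $E_2$-algebra $\coprod_n C_n(\mathbf{R}^2)$ with $\Omega^2 S^2$, and combine this identification with Arnol'd's homological stability to conclude that scanning is a homology equivalence on each $C_n$ in the stable range.

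First, I would construct the scanning map. Fix a small radius $\varepsilon > 0$. For a configuration $c = \{x_1,\dots,x_n\} \subset \mathbf{R}^2$ and a point $y \in \mathbf{R}^2$, define $s(c)(y) \in S^2$, where $S^2$ is viewed as the one-point compactification of $\mathbf{R}^2$, as follows: send $y$ to $\infty$ unless some $x_i$ lies within distance $\varepsilon$ of $y$, in which case the displacement $x_i-y$, rescaled by a bump function vanishing on the boundary of the ball $B_\varepsilon(y)$, produces a point of $\mathbf{R}^2 \subset S^2$. Since $c$ is finite, $s(c):\mathbf{R}^2\to S^2$ sends a neighborhood of infinity to $\infty$ and therefore represents an element of $\Omega^2 S^2$, lying in the degree-$n$ component, which is homotopy equivalent to $\Omega^2_0 S^2$.

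Next, I would compute the stable homology. The disjoint union $\coprod_n C_n(\mathbf{R}^2)$ carries a natural $E_2$-algebra structure, obtained by juxtaposing configurations placed in small disjoint little discs, and it is the free $E_2$-algebra on a point. By the May recognition principle, the group completion of this algebra is weakly equivalent to $\Omega^2 S^2$, with the comparison map realized by scanning; equivalently, the group completion theorem applied to this topological monoid identifies $\H_*(\hocolim_n C_n(\mathbf{R}^2))$ with $\H_*(\Omega^2_0 S^2)$ via scanning. Arnol'd's theorem then implies that the stabilization maps induce isomorphisms $\H_i(C_n) \to \H_i(\hocolim_m C_m)$ for $i < \lfloor n/2\rfloor + 2$. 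Since the scanning map is compatible with the stabilization maps (adjoining a point far from the scanned region does not affect the result up to homotopy), I conclude that $s_n: C_n(\mathbf{R}^2) \to \Omega^2_0 S^2$ is a homology isomorphism in the stated range.

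The main obstacle is the identification in the second step of the group completion of the free $E_2$-algebra on a point with $\Omega^2 S^2$, which is a form of the May recognition principle for iterated loop spaces and requires nontrivial input beyond elementary topology. Segal's own approach circumvents the full operadic machinery by working with labelled configuration spaces $C(\mathbf{R}^2;X)$ and proving that scanning is a weak equivalence, after group completion, by induction on the dimension of the ambient manifold; adapting this inductive argument, rather than invoking May's theorem as a black box, is the technically demanding part of the proof.
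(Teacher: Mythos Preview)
The paper does not prove this statement. It appears in the introduction as a classical theorem attributed to Segal \cite{segalconfiguration} and is quoted without proof, purely as motivation for the motivic analogue that follows (Proposition \ref{prop: stable homology}). There is therefore no proof in the paper to compare your proposal against.

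As a sketch of Segal's result your outline is reasonable, and you correctly identify the nontrivial input: either the group-completion theorem together with the identification of the free $E_2$-algebra's group completion as $\Omega^2 S^2$, or Segal's own inductive argument via labelled configuration spaces. One minor caution: your deduction uses Arnol'd's theorem as an ingredient, whereas Segal's paper actually derives stability as a \emph{consequence} of the scanning equivalence rather than assuming it; so if the intent is to reproduce Segal's logic, the order of implication is reversed.
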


In the motivic case, we have a similar result. We can consider the scheme $F$ which is the disjoint union of the schemes $F_n$ of degree $n$ pointed maps $\P^1\to\P^1$. As observed by Cazanave in \cite{cazanavealgebraic}, this object has the structure of a monoid such that the map $F\to\Z_{\geq 0}$ sending $F_n$ to $n$ is a monoid map. However, contrary to the topological case, $F$ is not grouplike (the computation of its $\pi_0$ is the main theorem of \cite{cazanavealgebraic}). We can form the homotopy colimit $F_\infty$ of the sequence
\[F_1\to F_2\to F_3\to\ldots\to F_n\to\ldots\]
where each map is obtained by multiplication with a fixed element in $F_1$. Our proof of motivic homological stability immediately implies the following theorem.

\begin{theo*}[\ref{prop: stable homology}]
The scanning map $C_n\to F_\infty$ induces an isomorphism in motivic cohomology in the stable range.
\end{theo*}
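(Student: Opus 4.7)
The plan is to combine two ingredients that are already in place. First, by Theorem \ref{theo: main bis} the scanning map $\M(C_n)\to \M(F_n)$ has a fiber whose connectivity in the motivic $t$-structure on mixed Tate motives tends to infinity with $n$, so it induces an isomorphism $\H^i(F_n,\Z(q))\cong \H^i(C_n,\Z(q))$ whenever $(i,q)$ lies in the stable range $i<\lfloor n/2\rfloor+2$ (with $n\geq 3$). It therefore suffices to prove that the structural map $F_n\to F_\infty$ induces an isomorphism on motivic cohomology in the same range.

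To handle that second step I would use that the motive functor $\M$ preserves (sequential homotopy) colimits, so that $\M(F_\infty)$ is the colimit in $\DM(\Z)$ of the diagram $\M(F_1)\to \M(F_2)\to\ldots$ along the multiplication maps. Motivic cohomology takes this colimit to an inverse limit, giving
\[\H^i(F_\infty,\Z(q))\cong \lim_{n}\H^i(F_n,\Z(q)).\]
By Proposition \ref{prop: stability for F}, the fiber of $\M(F_n)\to \M(F_{n+1})$ is highly coconnective with respect to the motivic $t$-structure, so its motivic cohomology vanishes in the stable range; the tower $\H^i(F_n,\Z(q))$ therefore stabilizes and the inverse limit is attained for $n$ large enough. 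Composing with the scanning isomorphism then yields the theorem: the map $C_n\to F_\infty$ factors as $C_n\to F_n\to F_\infty$, and each factor induces an isomorphism on $\H^i(-,\Z(q))$ when $i<\lfloor n/2\rfloor+2$.

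The main point that needs genuine care is the compatibility of the two stabilization maps on $F_n$: the multiplication-by-a-fixed-element-of-$F_1$ used by Cazanave to define $F_\infty$, and the ``certain natural maps'' $\M(F_n)\to \M(F_{n+1})$ whose fiber is controlled in Proposition \ref{prop: stability for F}. I expect this to be the only subtle step. It should be settled by a direct geometric check on the schemes of pointed degree $n$ rational maps $\P^1\to\P^1$, namely that a suitable choice of basepoint in $F_1$ makes the corresponding multiplication map coincide (at the level of motives) with the map used in Proposition \ref{prop: stability for F}; once that is verified, the argument above goes through unchanged.
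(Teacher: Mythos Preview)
Your approach is correct and matches the paper's: factor $C_n\to F_\infty$ as $C_n\xrightarrow{s_n}F_n\to F_\infty$ and show each leg is an isomorphism on $\H^i(-,\Z(q))$ in the stable range, the first by Theorem~\ref{theo: main bis} and the second by Proposition~\ref{prop: stability for F}.

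Your final paragraph, however, worries about a non-issue. The map $\alpha:F_d\to F_{d+1}$ appearing in Proposition~\ref{prop: stability for F} \emph{is} the Cazanave multiplication-by-$u$ map: it is introduced immediately before that proposition as the composite $F_d\cong F_d\times\on{Spec}(\Z)\xrightarrow{\id\times u}F_d\times F_1\xrightarrow{\oplus}F_{d+1}$, and $F_\infty$ is then defined as the homotopy colimit along these very maps. There is no separate ``natural map'' to be reconciled with it, so the compatibility check you anticipate is vacuous.

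One small technical remark on your inverse-limit step: writing $\H^i(F_\infty,\Z(q))\cong\lim_n\H^i(F_n,\Z(q))$ is not automatic, since mapping out of a sequential homotopy colimit produces a Milnor sequence with a $\lim^1$ term. You should note that the tower $\{\H^{i-1}(F_n,\Z(q))\}_n$ also stabilizes (by the same Proposition~\ref{prop: stability for F}), so the $\lim^1$ vanishes and your identification holds. The paper simply asserts the isomorphism ``by an argument analogous to the previous corollary'', using that $\Z(q)[i]\in\DMT(\Z)_{<d}$ and the adjunction for $t_{<d}$, which amounts to the same thing.
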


Moreover, the complex points of $F_\infty$ with the usual complex topology are a model for $\Omega_0^2S^2$ as shown in Proposition \ref{prop: Finfty=Omega0}.

\subsection*{Acknowledgments}

I wish to thank S{\o}ren Galatius, Martin Palmer and Federico Cantero for helpful conversations about homological stability, Cl\'ement Dupont for teaching me about mixed Tate motives, Martin Frankland for sharing his expertise on triangulated categories, Sam Nariman for pointing out the reference \cite{farbtopology}, Jesse Wolfson for finding a mistake in a former version of this paper and the anonymous referee for several useful comments.

\section{t-structures and homological stability}

Recall the following definition.

\begin{defi}\label{defi: t-structure}
Let $\cat{T}$ be a triangulated category. A \textbf{$t$-structure} on $\cat{T}$ is the data of two full subcategories $\cat{T}_{\geq 0}$ and $\cat{T}_{< 0}$ satisfying the following conditions:
\begin{itemize}
\item If $U\in\cat{T}_{\geq 0}$ and $V\in\cat{T}_{< 0}$, then $\cat{T}(U,V)=0$.
\item The category $\cat{T}_{\geq 0}$ is stable under suspensions and the category $\cat{T}_{< 0}$ is stable under desuspension.
\item There are functors $t_{< 0}$ and $t_{\geq 0}$  from $\cat{T}$ to $\cat{T}$ such that the image of $t_{<0}$  is contained in $\cat{T}_{<0}$ and the image of $t_{\geq 0}$ is contained in $\cat{T}_{\geq 0}$. Moreover, there is a natural distinguished triangle $t_{\geq 0} X\to X\to t_{<0}X$.
\end{itemize}
\end{defi}

\begin{rem}\label{rem: truncation functors}
Usually (e.g. in \cite[Definition IV.4.2. p. 278]{gelfandmethods}), the third axiom is stated by requiring that any object $X$ fits into a cofiber sequence
\[t_{\geq 0} X\to X\to t_{<0}X\]
with $t_{\geq 0}X\in\cat{T}_{\geq 0}$ and $t_{<0}X\in\cat{T}_{<0}$. However, our definition is not less general as it can be shown that this distinguished triangle can be made functorial and that the functor $t_{\geq 0}$ is right adjoint to the inclusion which proves that it is unique up to unique isomorphism and similarly for $t_{<0}$. A proof of these facts can be found in \cite[Lemma IV.4.5 p.279]{gelfandmethods}. In particular, all the cofiber sequences $A\to X\to B$ with $A$ in $\cat{T}_{\geq 0}$ and $B$ in $\cat{T}_{<0}$ are isomorphic.
\end{rem}

Let $\cat{T}$ be a triangulated category equipped with a $t$-structure. For $d$ an integer, we denote by $\cat{T}_{\geq d}$ the full subcategory  of $\cat{T}$ spanned by objects isomorphic to $X[d]$ with $X$ in $\cat{T}_{\geq 0}$. We denote by $\cat{T}_{<d}$ the full subcategory spanned by objects isomorphic to $X[d]$ with $X$ in $\cat{T}_{<0}$. We denote by $t_{\geq d}$ the functor $X\mapsto (t_{\geq 0} X[-d])[d]$ and $t_{<d}$ the functor $X\mapsto (t_{<0}X[-d])[d]$. We have a natural distinguished triangle
\[t_{\geq d}X\to X\to t_{<d}X\]

For $\cat{T}$ a triangulated category equipped with a $t$-structure, we denote by $\cat{T}_{\heartsuit}$ the full subcategory spanned by objects that are both in $\cat{T}_{\geq 0}$ and in $\cat{T}_{<1}$. The category $\cat{T}_{\heartsuit}$ is an abelian category (see for instance \cite[IV.4.7.]{gelfandmethods} for a proof).

\begin{defi}\label{defi: t-structure functor}
Let $\cat{T}$ and $\cat{U}$ be two triangulated categories equipped with $t$-structure. We say that a triangulated functor $B:\cat{T}\to\cat{U}$ is \textbf{compatible with the $t$-structure} if $B(\cat{T}_{\geq 0})\subset \cat{U}_{\geq 0}$ and $B(\cat{T}_{<0})\subset \cat{U}_{<0}$.
\end{defi}

Note that a triangulated functor that is compatible with the $t$-structure induces a functor $\cat{T}_\heartsuit\to\cat{U}_\heartsuit$. The resulting functor of abelian categories is exact. Since we were unable to find this result in the literature, we have included a proof.

\begin{prop}\label{prop: t-exact functor induces exact functor on the heart}
If $B:\cat{T}\to\cat{U}$ is a triangulated functor that is compatible with the $t$-structure, the induced functor $B:\cat{T}_\heartsuit\to\cat{U}_\heartsuit$ is exact.
\end{prop}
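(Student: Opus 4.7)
The plan is to reduce the exactness of $B$ on hearts to its (automatic) preservation of distinguished triangles. The crucial fact, due to Beilinson--Bernstein--Deligne, is that short exact sequences in $\cat{T}_\heartsuit$ are precisely the ``truncated'' distinguished triangles: a sequence $0 \to A \to X \to C \to 0$ in $\cat{T}_\heartsuit$ is exact if and only if there exists a morphism $\delta: C \to A[1]$ making $A \to X \to C \xrightarrow{\delta} A[1]$ a distinguished triangle in $\cat{T}$.

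Granting this characterization, the proof is immediate. Given a short exact sequence $0 \to A \to X \to C \to 0$ in $\cat{T}_\heartsuit$, I extract the associated distinguished triangle $A \to X \to C \to A[1]$ in $\cat{T}$. Applying the triangulated functor $B$ produces a distinguished triangle
\[B(A) \to B(X) \to B(C) \to B(A)[1]\]
in $\cat{U}$. By compatibility of $B$ with the $t$-structures, each of $B(A)$, $B(X)$, $B(C)$ lies in $\cat{U}_\heartsuit$. Applying the converse direction of the characterization in $\cat{U}$ then yields that $0 \to B(A) \to B(X) \to B(C) \to 0$ is short exact in $\cat{U}_\heartsuit$, which is the desired conclusion.

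The main obstacle, if one does not wish to simply cite the characterization, is to prove it. The proof uses two tools: first, that applying $\cat{T}(Y,-)$ to a distinguished triangle yields a long exact sequence of abelian groups; second, the orthogonality axiom of a $t$-structure implies $\cat{T}(Y, Z[-1]) = \cat{T}(Y[1], Z) = 0$ for any $Y, Z \in \cat{T}_\heartsuit$, since $Y[1] \in \cat{T}_{\geq 1}$ and $Z \in \cat{T}_{< 1}$ (and dually). For the ``if'' direction, one starts with a distinguished triangle $A \to X \to C \to A[1]$ whose three vertices lie in the heart; the long exact sequence combined with the vanishings above shows that $A$ represents the kernel of $X \to C$ and $C$ represents the cokernel of $A \to X$, so the sequence is short exact. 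For the ``only if'' direction, one completes the monomorphism $A \to X$ to a distinguished triangle $A \to X \to Z \to A[1]$, uses $\cat{T}(T,-)$ for $T$ ranging over the heart together with the long exact sequence to verify that $Z \in \cat{T}_\heartsuit$, and finally identifies $Z$ with $C$ via the universal property of the cokernel.
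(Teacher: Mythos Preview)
Your proof is correct but takes a different route from the paper's. The paper argues directly that $B$ preserves kernels: for $f:X\to Y$ in $\cat{T}_\heartsuit$, it uses the description $\ker f = t_{\geq 0}F$ where $F$ is the fiber of $f$ in $\cat{T}$, then shows $B(t_{\geq 0}F)\cong t_{\geq 0}B(F)$ by applying $B$ to the truncation triangle $t_{\geq 0}F\to F\to t_{<0}F$ and invoking the uniqueness of such decompositions (Remark~\ref{rem: truncation functors}). You instead invoke the Beilinson--Bernstein--Deligne characterization of short exact sequences in the heart as precisely the distinguished triangles whose three vertices lie in the heart, and then use that $B$ preserves triangles and sends heart to heart. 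Your route is the standard textbook argument and is cleaner once the characterization is in hand; the paper's route is a bit more self-contained, relying only on the kernel formula already cited from Gelfand--Manin, and has the side benefit of showing explicitly that $B$ commutes with the truncation functors.

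One minor quibble with your sketch of the ``only if'' direction: probing with $\cat{T}(T,-)$ for $T$ ranging over the heart does not by itself detect membership in $\cat{T}_\heartsuit$. The usual argument uses closure of $\cat{T}_{\geq 0}$ and $\cat{T}_{<1}$ under extensions to place $Z$ in $\cat{T}_{\geq 0}\cap\cat{T}_{<2}$, and then the long exact sequence of the cohomology functors $\pi_i = t_{<i+1}t_{\geq i}$ together with the monomorphism hypothesis on $A\to X$ to force $\pi_1(Z)=0$. This does not affect your main argument, which only needs the statement of the characterization.
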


\begin{proof}
It suffices to prove that $B$ preserves kernels and cokernels. We do the case of kernels, the other case is similar. Given a map $f:X\to Y$ in $\cat{T}_{\heartsuit}$, it is shown in \cite[IV.4.7.]{gelfandmethods} that its kernel is obtained by the composite $t_{\geq 0}F\to F\to X$ where $F\to X\to Y\to F[1]$ is a distinguished triangle completing $f$ (note that the authors of \cite{gelfandmethods} use the cohomological grading as opposed to the homological grading used in our paper which explains the difference). Thus, we have an exact sequence
\[0\to t_{\geq 0}F\to X\xrightarrow{f} Y\]
The functor $B$ sends it to
\[0\to B(t_{\geq 0}F)\to B(X)\xrightarrow{B(f)} B(Y)\]
and we have to check that this is exact. In other words, we want to prove that $B(t_{\geq 0}F)$ is isomorphic to $t_{\geq 0}B(F)$. We know that there is a cofiber sequence
\[B(t_{\geq 0}F)\to B(F)\to B(t_{<0}F)\]
which by Remark \ref{rem: truncation functors} must be isomorphic to
\[t_{\geq 0}B(F)\to B(F)\to t_{<0}B(F)\]
\end{proof}

We now observe that a $t$-structure is all that is needed in order to be able to define the concept of homological stability.

\begin{defi}\label{defi: homological stability}
Let $l:\mathbf{N}\to\mathbf{N}$ be a function tending to $+\infty$. We say that a sequence of objects $\{M_d\}_{d\in\mathbf{N}}$ of a triangulated category $\cat{T}$ with $t$-structure $(\cat{T}_{\geq 0},\cat{T}_{<0})$ satisfies \textbf{homological stability with slope $l$} if, for each $d$, there exists an isomorphism $t_{< l(d)}M_d\xrightarrow{\cong} t_{< l(d)}M_{d+1}$.
\end{defi}

\begin{rem}
The derived category of a ring $\Lambda$, denoted $\D(\Lambda)$, has a standard $t$-structure in which the objects of $\D(\Lambda)_{\geq 0}$ are the complexes whose homology is concentrated in nonnegative degrees and the objects of $\D(\Lambda)_{<0}$ are the complexes whose homology is concentrated in negative degrees. Given a sequence of spaces $\{X_d\}_{d\in \N}$, we can consider the sequence $\{C_*(X_d,\Lambda)\}_{d\in\N}$ of objects of $\D(\Lambda)$. Then, if this sequence satisfies homological stability with slope $l$, for any $i<l(d)$, there is an isomorphism $\H_i(X_d,\Lambda)\cong \H_i(X_{d+1},\Lambda)$. Hence, classical homological stability is a particular case of our definition.
\end{rem}

\begin{rem}
In many cases, the sequence of objects $\{M_d\}$ is equipped with maps $f_d:M_d\to M_{d+1}$ and the isomorphisms $t_{< l(d)}M_d\xrightarrow{\cong} t_{< l(d)}M_{d+1}$ are given by $t_{< l(d)}(f_d)$. However there are cases where such maps are not available. This happens in the case of homological stability for configuration spaces in a closed manifold \cite{benderskylocalization,canterohomological} or in our main theorem.
\end{rem}

We have the following lemma whose proof is trivial.

\begin{lemm}\label{lemm: transfer of homological stability}
Let $\cat{T}$ be a triangulated category with a $t$-structure. Let $\{M_d\}_{d\in\mathbf{N}}$ and $\{N_d\}_{d\in\mathbf{N}}$ be two sequences of objects of $\cat{T}$ and let $j_d:M_d\to N_d$ be maps. Assume that there exists a function $m:\mathbf{N}\to\mathbf{N}$ tending to infinity such that all the maps $t_{<m(d)}(j_d)$ are isomorphisms. Then if one of $\{M_d\}$ or $\{N_d\}$ has homological stability with slope $l$, the other has homological stability with slope $\mathrm{min}(l,m)$.
\end{lemm}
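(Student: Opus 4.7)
The plan is to unpack what ``homological stability with slope $l$'' means for $\{M_d\}$, transport the isomorphism from $M$ to $N$ through the maps $j_d$, and then truncate at the right degree so that everything is forced to be an isomorphism. There is no real obstacle; the work is just to assemble the correct truncation.

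First I would record the following general fact about truncation functors: if $k \leq k'$ then $t_{<k}\circ t_{<k'}\cong t_{<k}$. Indeed, for any $X\in\cat{T}$ there is a distinguished triangle
\[t_{\geq k'}X\to X\to t_{<k'}X,\]
and applying $t_{<k}$ to it (together with the fact that $t_{\geq k'}X\in\cat{T}_{\geq k'}\subset\cat{T}_{\geq k}$ so $t_{<k}t_{\geq k'}X=0$) identifies $t_{<k}X$ with $t_{<k}t_{<k'}X$ via the canonical map. In particular, if $t_{<m(d)}(j_d)$ is an isomorphism and $k\leq m(d)$, then applying $t_{<k}$ to this isomorphism yields that $t_{<k}(j_d):t_{<k}M_d\to t_{<k}N_d$ is also an isomorphism.

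Now assume $\{M_d\}$ has homological stability with slope $l$ (the other direction is symmetric), so there are isomorphisms $\alpha_d:t_{<l(d)}M_d\xrightarrow{\cong} t_{<l(d)}M_{d+1}$. Set $k(d)=\min(l(d),m(d),m(d+1))$; this still tends to $+\infty$, and pointwise $k(d)\leq\min(l(d),m(d))$, so it is a valid slope in the sense of the lemma. Applying $t_{<k(d)}$ and using the preceding observation three times, I would form the composition of isomorphisms
\[t_{<k(d)}N_d \xleftarrow{t_{<k(d)}(j_d)} t_{<k(d)}M_d \xrightarrow{t_{<k(d)}(\alpha_d)} t_{<k(d)}M_{d+1} \xrightarrow{t_{<k(d)}(j_{d+1})} t_{<k(d)}N_{d+1},\]
where the outer two maps are isomorphisms because $k(d)\leq m(d)$ and $k(d)\leq m(d+1)$ respectively, and the middle map is an isomorphism because $k(d)\leq l(d)$ and $\alpha_d$ is an isomorphism. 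This gives the required isomorphism $t_{<k(d)}N_d\cong t_{<k(d)}N_{d+1}$, establishing homological stability of $\{N_d\}$ with slope $k(d)\leq \min(l,m)(d)$.

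The only thing to be slightly careful about is the minor bookkeeping in replacing $\min(l,m)$ by $\min(l(d),m(d),m(d+1))$ in order to make sense of the composition: since both $l$ and $m$ tend to infinity, this adjusted function still tends to infinity and is bounded above by $\min(l,m)$ up to a shift, so the conclusion of the lemma is unaffected.
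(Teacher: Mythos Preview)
Your proof is correct; the paper gives no proof at all, stating only that the lemma is trivial. You have also correctly identified a minor imprecision in the statement: to compare $N_d$ with $N_{d+1}$ one must invoke both $j_d$ and $j_{d+1}$, so the honest slope is $\min(l(d),m(d),m(d+1))$ rather than $\min(l(d),m(d))$, though this distinction is immaterial for the application in the paper (where $l=m$) and for the qualitative conclusion that the slope tends to infinity.
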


\section{Mixed motives}\label{section: mixed motives}

We denote by $\MS(\Z)$ the model category of $\A^1$-spaces over $\on{Spec}(\Z)$. Its underlying category is the category of simplicial presheaves over the category $\cat{Sm}_{\Z}$ of smooth quasi-projective schemes over $\on{Spec}(\Z)$. The model structure is obtained by left Bousfield localization of the injective model structure by imposing descent with respect to the Nisnevich hypercovers and the fact that for any smooth schemes $U$, the map $U\times\A^1\to U$ is a weak equivalence. We can construct in an analogous way the model category $\MS(R)$ of $\A^1$-spaces over $R$ for any commutative ring $R$. The functor $j:X\mapsto X\times_{\on{Spec}(\Z)}\on{Spec}(\Q)$ is a morphism of sites from the Nisnevich site of smooth schemes over $\Z$ to the Nisnevich site of smooth schemes over $\Q$. It induces a symmetric monoidal left Quillen functor $j^*:\MS(\Z)\to\MS(\Q)$. We get similar functors $i_p^*:\MS(\Z)\to\MS(\F_p)$ for each prime number $p$.

We denote by $\DM(\Z)$ (resp. $\DM(\Q)$, resp. $\DM(\F_p)$) the triangulated category of motives with $\Z$-coefficients over $\on{Spec}(\Z)$ (resp. $\on{Spec}(\Q)$, resp $\on{Spec}(\F_p)$). There are several available models for such a category. We use the model constructed by Spitzweck in \cite{spitzweckcommutative}. One first constructs the model category of motivic spectra over $\Z$ (resp. over $\Q$, resp. over $\F_p$). This is obtained from the model category of pointed objects of $\MS(\Z)$ (resp. $\MS(\Z)$, resp. $\MS(\Z)$) by forcing the motivic space $\P^1$ with base point $\infty$ to be invertible with respect to the smash product. The category $\DM(\Z)$ is then the homotopy category of modules over an $\mathscr{E}_\infty$-algebra $\MZ$ (resp. $\MZ_{\Q}$, resp. $\MZ_{\F_p}$) in the symmetric monoidal model category of motivic spectra over $\Z$ (resp. over $\Q$, resp. over $\F_p$) called the motivic Eilenberg-MacLane spectrum.  The functor $j^*$ from $\MS(\Z)$ to $\MS(\Q)$ induces a symmetric monoidal left adjoint $j^*:\DM(\Z)\to \DM(\Q)$. Similarly, for each prime $p$, the map $i_p:\Z\to\F_p$ induces a symmetric monoidal left adjoint $i_p^*:\DM(\Z)\to\DM(\F_p)$.

As proved in \cite[Theorem 1]{rondigsmodules}, the triangulated category $\DM(\Q)$ is equivalent to Voevodsky's big category of motives over $\on{Spec}(\Q)$ as a symmetric monoidal triangulated category and similarly, the triangulated category $\DM(\F_p)$ is equivalent to Voevodsky's big category of motives over $\on{Spec}(\F_p)$.

When seen as an object of $\DM(\Z)$, the motivic Eilenberg-MacLane spectrum $\MZ$ is simply denoted $\Z(0)$. More generally, for $q$ a nonnegative integer, we denote by $\Z(q)$ the $\MZ$-module $\MZ\wedge(\Sigma^\infty\P^1)^{\wedge q}[-2q]$. For a negative integer $q$, we denote by $\Z(q)$ the object $\Hom_{\DM(\Z)}(\Z(-q),\Z(0))$ where $\Hom_{\DM(\Z)}$ denotes the inner Hom in the symmetric monoidal category $\DM(\Z)$. Note that the triangulated category $\DM(\Z)$ is tensored over $\D(\Z)$, the derived category of chain complexes over $\Z$. For $A$ an object of $\D(\Z)$ and $q$ an integer, we denote by $A(q)$ the tensor product $A\otimes\Z(q)$.

We define similarly objects $\Z_\Q(q)$ in the triangulated category $\DM(\Q)$ and $\Z_{\F_p}(q)$ in $\DM(\F_p)$. We have an isomorphism $j^*\Z(q)\cong \Z_\Q(q)$ (see \cite[Theorem 7.6. and Theorem 7.18.]{spitzweckcommutative}) and $i_p^*\Z(q)\cong \Z_{\F_p}(q)$ (see \cite[Theorem 9.16.]{spitzweckcommutative}).

The triangulated category $\DM(\Z)$ comes equipped with a symmetric monoidal functor $\M:\cat{Sm}_\Z\to\DM(\Z)$ which sends to a smooth scheme its motive. For instance, we have the classical formula
\[\M(\P^n)\cong\Z(0)\oplus\Z(1)[2]\oplus\ldots\oplus\Z(n)[2n]\]

\begin{defi}\label{defi: motivic cohomology}
For $X$ a scheme over $\Z$, we define its \textbf{motivic cohomology} of degree $p$ with coefficients in $\Z(q)$ by the following formula:
\[\H^p(X,\Z(q)):=\DM(\Z)(\M(X),\Z(q)[p])\]
\end{defi}

Spitzweck proves in \cite[Corollary 7.19.]{spitzweckcommutative} that this definition coincides with the definition of motivic cohomology of schemes over $\Z$ given by Levine in \cite{levinektheory} by mean of a generalization of Bloch cycle complexes. 

If $X$ is a scheme over $\Q$, we denote by $\H^p(X,\Z(q))$ the motivic cohomology groups given by the formula:
\[\H^p(X,\Z(q))=\DM(\Q)(\M(X),\Z_\Q(q)[p])\]
and similarly for schemes over $\F_p$. These motivic cohomology groups coincide with the motivic cohomology groups defined in \cite{mazzalecture}.

We will need the following fact.

\begin{theo}\label{prop: reduction to fields}
For all integers $n$ and $r$, there is an exact sequence
\[\prod_p \H^{n-2}(\on{Spec}(\F_p),\Z(r-1))\to\H^n(\on{Spec}(\Z),\Z(r))\to\H^n(\on{Spec}(\Q),\Z(r))\]
where the product is taken over the set of all prime numbers.
\end{theo}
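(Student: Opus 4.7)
The plan is to obtain this as a three-term slice of the Gysin (localization) long exact sequence in $\DM(\Z)$ associated with the open--closed decomposition of $\on{Spec}(\Z)$ into its generic point $j:\on{Spec}(\Q)\hookrightarrow\on{Spec}(\Z)$ and its closed complement $i:\coprod_p\on{Spec}(\F_p)\hookrightarrow\on{Spec}(\Z)$. Each individual closed immersion $\on{Spec}(\F_p)\hookrightarrow\on{Spec}(\Z)$ is regular of codimension one, so absolute purity identifies $i^!\Z(r)\simeq\Z(r-1)[-2]$; the six-functor distinguished triangle $i_*i^!\to\id\to Rj_*j^*$ applied to $\Z(r)$ and pushed down to global sections therefore unfolds into a long exact sequence containing
\[\H^{n-2}\bigl({\textstyle\coprod_p}\on{Spec}(\F_p),\Z(r-1)\bigr)\to \H^n(\on{Spec}(\Z),\Z(r))\to \H^n(\on{Spec}(\Q),\Z(r)),\]
where the adjunction $j^*\dashv Rj_*$ identifies the rightmost group.

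The leftmost group is then identified with $\prod_p\H^{n-2}(\on{Spec}(\F_p),\Z(r-1))$ using the general principle that the motive of a disjoint union of smooth schemes is the direct sum of the constituent motives, together with the fact that the cohomological functor $\Hom_{\DM(\Z)}(-,\Z(r-1)[n-2])$ converts direct sums into products.

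The main obstacle is setting up the Gysin triangle rigorously with closed part the infinite, non-quasi-compact disjoint union $\coprod_p\on{Spec}(\F_p)$, since Spitzweck's six-functor formalism is most comfortable with schemes of finite type. The standard workaround is to approximate by finite sets of primes: for each finite $S$ the open immersion $\on{Spec}(\Z[S^{-1}])\hookrightarrow\on{Spec}(\Z)$ has closed complement the finite disjoint union $\coprod_{p\in S}\on{Spec}(\F_p)$, and the ordinary Gysin triangle produces a truncated version of the exact sequence
\[{\textstyle\bigoplus_{p\in S}}\H^{n-2}(\on{Spec}(\F_p),\Z(r-1))\to \H^n(\on{Spec}(\Z),\Z(r))\to \H^n(\on{Spec}(\Z[S^{-1}]),\Z(r)).\]
Passing to the filtered colimit over $S$ and invoking continuity of motivic cohomology for the cofiltered limit $\on{Spec}(\Q)=\lim_S\on{Spec}(\Z[S^{-1}])$ of affine schemes with smooth (hence flat) transition maps, which gives $\colim_S\H^n(\on{Spec}(\Z[S^{-1}]),\Z(r))\cong\H^n(\on{Spec}(\Q),\Z(r))$, assembles the finite localization sequences into the desired one.
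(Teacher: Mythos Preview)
Your approach is essentially the paper's: both extract the three-term sequence from the localization (Gysin) triangle for the open immersion $j:\on{Spec}(\Q)\hookrightarrow\on{Spec}(\Z)$. The paper simply invokes \cite[Lemma 7.8]{spitzweckcommutative}, which is precisely this distinguished triangle with the purity shift already built in; Spitzweck proves it directly in $\on{SH}(\Z)$, so the non--finite-type issue you flag for the closed complement never arises. Your finite-$S$ approximation plus continuity is a standard and correct way to recover the same triangle by hand.

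One small wrinkle: your two passes give different left-hand terms. In the first pass you argue via $\M(\coprod)=\bigoplus\M$ and $\Hom(\bigoplus,-)=\prod\Hom$ to land on $\prod_p$. In the second (rigorous) pass, the filtered colimit over $S$ of $\bigoplus_{p\in S}$ is the infinite direct sum $\bigoplus_p$, not the product. So the argument you actually justify yields
\[\bigoplus_p \H^{n-2}(\on{Spec}(\F_p),\Z(r-1))\to\H^n(\on{Spec}(\Z),\Z(r))\to\H^n(\on{Spec}(\Q),\Z(r)),\]
which is exact (filtered colimits of exact sequences of abelian groups are exact). This is harmless for every use the paper makes of the theorem---only vanishing statements are extracted, and $\bigoplus_p A_p=0\iff\prod_p A_p=0$---but if you want the statement literally as written with $\prod_p$, you should cite the triangle from Spitzweck (or set up $i_*$ for the full closed complement) rather than pass to the colimit.
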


\begin{proof}
This follows from the distinguished triangle in \cite[Lemma 7.8.]{spitzweckcommutative} by applying derived global sections.
\end{proof}

\subsection*{Gysin triangles}

The following theorem is one of the most useful tool for computations in the category $\DM(\Z)$.

\begin{theo}\label{theo: Gysin}
Let $X$ be a smooth scheme and $Z$ be a closed smooth subscheme of codimension $n$. Let $U$ be the complement of $Z$ in $X$. Then there is a cofiber sequence in $\DM(\Z)$
\[\M(U)\to\M(X)\to \M(Z)(n)[2n]\]
where the first map is induced by the inclusion $U\to X$.
\end{theo}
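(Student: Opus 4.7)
The plan is to follow Voevodsky's strategy for the Gysin sequence via deformation to the normal cone, transposed to Spitzweck's construction of $\DM(\Z)$. Since the functor $\M$ factors as $\MZ\wedge\Sigma^\infty(-)_+$ through the stable motivic homotopy category $\Sp(\MS(\Z))$, and smashing with $\MZ$ preserves cofiber sequences, it suffices to produce an analogous cofiber sequence in motivic spectra of the form
\[\Sigma^\infty U_+\to\Sigma^\infty X_+\to \Sigma^\infty \mathrm{Th}(N_{Z/X}),\]
where $N_{Z/X}$ is the normal bundle of $Z$ in $X$ and $\mathrm{Th}$ denotes the Thom construction, and then to identify $\MZ\wedge\Sigma^\infty\mathrm{Th}(N_{Z/X})$ with $\M(Z)(n)[2n]$.

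The first and main input is the homotopy purity equivalence: the natural map $X/(X\setminus Z)\to \mathrm{Th}(N_{Z/X})$ is an $\A^1$-equivalence of pointed motivic spaces. This is established via deformation to the normal cone, namely a scheme $B$ equipped with a morphism $B\to\A^1$ whose fiber over $1$ is $X$ and whose fiber over $0$ is the total space of $N_{Z/X}$, together with a closed subscheme $Z\times\A^1\hookrightarrow B$ restricting to $Z\hookrightarrow X$ on the first fiber and to the zero section on the second. Forming the quotient by the open complement of $Z\times\A^1$ and invoking Nisnevich excision together with $\A^1$-invariance then yields the desired equivalence. All of these manipulations pass through unchanged over $\on{Spec}(\Z)$ because the defining axioms of $\MS(\Z)$ impose both Nisnevich descent and $\A^1$-invariance.

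The second input is the Thom isomorphism: for a rank $n$ vector bundle $E\to Z$ one has $\MZ\wedge\Sigma^\infty\mathrm{Th}(E)\simeq\M(Z)(n)[2n]$ in $\DM(\Z)$. This relies on the orientability of $\MZ$, which over a field is classical and over $\on{Spec}(\Z)$ is established in Spitzweck's paper via a motivic first Chern class together with the projective bundle formula $\M(\P(E\oplus\mathcal{O}))\cong\bigoplus_{i=0}^{n}\M(Z)(i)[2i]$. Identifying $\mathrm{Th}(E)$ with the cofiber of $\M(\P(E))\to\M(\P(E\oplus\mathcal{O}))$ and reading off the top Tate summand yields the Thom equivalence.

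I expect homotopy purity to be the main technical obstacle, since the deformation to the normal cone must be produced in families over $\on{Spec}(\Z)$ and the Nisnevich excision hypotheses verified at each step — genuine geometric content that cannot be purely formal. By contrast, once the projective bundle formula is in hand the Thom isomorphism is essentially formal. In practice this proof reduces to assembling the purity and Thom statements already available in Spitzweck's treatment of $\DM(\Z)$.
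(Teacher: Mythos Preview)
Your outline is correct and follows the standard route to the Gysin triangle: homotopy purity via deformation to the normal cone, followed by the Thom isomorphism coming from the orientation of $\MZ$ and the projective bundle formula. This is essentially the content of the reference the paper invokes.

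Note, however, that the paper does not give its own proof of this statement at all: the entire proof reads ``See \cite[Definition 4.6.]{deglisearound}.'' In other words, the author treats the Gysin triangle as a known input and defers to D\'eglise's general treatment of Gysin triangles for oriented ring spectra. Your proposal is therefore not so much a different route as an unpacking of what that citation contains. The only point worth flagging is that you attribute the orientation and projective bundle formula to Spitzweck's paper, whereas the author chose to cite D\'eglise; both are valid sources for the required orientation theory over a general base, and your sketch would go through with either.
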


\begin{proof}
See \cite[Definition 4.6.]{deglisearound}.
\end{proof}

\subsection*{Betti realization}

We denote by $\DM_{gm}(\Z)$ the thick subcategory spanned by the motives $\M(X)$ for $X$ any smooth scheme. According to \cite[Theorem 11.1.13]{cisinskitriangulated} the objects of $\DM_{gm}(\Z)$ are exactly the compact objects of $\DM(\Z)$. The category $\DM_{gm}(\Z)$ inherits the symmetric monoidal structure of $\DM(\Z)$ and contains all the motives $\Z(n)$ with $n\in\Z$. We define analogously a thick subcategory $\DM_{gm}(\Q)$ of $\DM(\Q)$. Note that there is an isomorphism
\[j^*\M(X)\cong \M(X\times_{\on{Spec}(\Z)}\on{Spec}(\Q))\]
It follows that the functor $j^*$ restricts to a functor
\[j^*:\DM_{gm}(\Z)\to\DM_{gm}(\Q)\]

There is a Betti realization functor $\DM_{gm}(\Q)\to \D(\Z)$ constructed in \cite[Section 3.3.]{lecomterealisation}. We denote by $B$ its precomposition with the map $j^*$ and call it the Betti realization of a motive on $\on{Spec}(\Z)$. This is a symmetric monoidal triangulated functor. Its restriction to smooth schemes is understood via the following proposition.

\begin{prop}\label{prop: Betti realization on smooth schemes}
Let $X$ be a smooth scheme over $\Z$, then, there is an isomorphism
\[\H_q(B(\M(X)))\cong \H_q(X(\C),\Z)\]
where the right hand side denotes the singular homology groups of $X(\C)$ seen as a complex manifold.
\end{prop}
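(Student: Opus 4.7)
The plan is to reduce the statement to the analogous fact for motives over $\Q$, which is built into the very construction of the Betti realization functor of \cite{lecomterealisation}.

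First, I would invoke the isomorphism $j^*\M(X)\cong \M(X\times_{\on{Spec}(\Z)}\on{Spec}(\Q))$ which is noted earlier in the section (it is immediate from the fact that $j^*$ is symmetric monoidal, left Quillen, and commutes with the representables). This reduces the problem to proving that for any smooth scheme $Y$ over $\Q$, the Betti realization of $\M(Y)\in\DM_{gm}(\Q)$ has homology groups $\H_q(Y(\C),\Z)$.

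Second, I would unwind the definition of the Betti realization in \cite{lecomterealisation}. By construction, it factors as base change along $\on{Spec}(\C)\to\on{Spec}(\Q)$ (landing in an appropriate category of motives over $\C$) followed by a functor which, on representables given by the motive of a smooth $\C$-scheme $Y_\C$, computes the singular chain complex of the associated complex analytic space $Y_\C^{\mathrm{an}}=Y(\C)$. The verification that this composite sends $\M(Y)$ to $C_*(Y(\C),\Z)$ is essentially the defining property of the functor; one checks it first on $\Z(0)$ and on the Tate twists $\Z(q)$ (which realize to $\Z[0]$), and then extends by the universal property of $\DM_{gm}(\Q)$ as the thick tensor-triangulated subcategory generated by motives of smooth schemes. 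The key compatibility to check is with Nisnevich descent and $\A^1$-invariance, both of which are immediate after analytification since open covers in the Nisnevich topology yield open covers in the analytic topology and $\A^1(\C)=\C$ is contractible.

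Third, I would observe that $X(\C)=(X\times_{\on{Spec}(\Z)}\on{Spec}(\Q))(\C)$ as topological spaces, since any $\on{Spec}(\C)\to X$ automatically factors through $X_\Q$ because $\C$ is a $\Q$-algebra. Combining this with the previous two steps yields the desired isomorphism $\H_q(B(\M(X)))\cong \H_q(X(\C),\Z)$.

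The main obstacle is really a bookkeeping one: matching the specific model of the Betti realization used in \cite{lecomterealisation} with the naive ``take singular chains of the complex points'' functor on the subcategory of motives of smooth schemes. Once this identification is in place on generators, the triangulated and monoidal structure propagate it to all of $\DM_{gm}(\Q)$, and there is no essential difficulty in the passage from $\DM(\Z)$ to $\DM(\Q)$ beyond the identification of complex points recalled above.
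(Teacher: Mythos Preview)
The paper states this proposition without proof, treating it as a direct consequence of the construction of the Betti realization in \cite{lecomterealisation}. Your outline therefore supplies more than the paper does, and the overall strategy---reduce to $\Q$ via $j^*\M(X)\cong\M(X_\Q)$, invoke the defining property of the Betti realization over $\Q$, and identify $X(\C)$ with $X_\Q(\C)$---is correct and is exactly what underlies the unproved assertion.

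One passage in your second step is muddled, however. You write that one verifies $B(\M(Y))\simeq C_*(Y(\C),\Z)$ ``first on $\Z(0)$ and on the Tate twists $\Z(q)$ \ldots\ and then extends by the universal property of $\DM_{gm}(\Q)$ as the thick tensor-triangulated subcategory generated by motives of smooth schemes.'' This is backwards: the Tate objects do not generate the motives of arbitrary smooth schemes (the implication runs the other way), so checking on $\Z(q)$ alone cannot establish the claim for a general smooth $Y$. What actually happens in \cite{lecomterealisation} is that the realization is constructed so that its value on $\M(Y)$ is, essentially by definition, the singular chain complex of $Y(\C)$; the compatibility with Nisnevich descent and $\A^1$-invariance you mention is what allows the assignment $Y\mapsto C_*(Y(\C),\Z)$ to descend from smooth schemes to a triangulated functor on $\DM_{gm}(\Q)$, not a device for recovering the values on smooth schemes from the values on Tate objects. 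If you drop the sentence about Tate twists and simply cite the construction for the identification on representables, your argument is complete.
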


We will also need the following fact. 

\begin{prop}\label{prop: Betti realization of Tate twists}
Let $A$ be a bounded chain complex of finitely generated abelian groups. Then there is an isomorphism $B(A(q))\cong A$ in the category $\D(\Z)$.
\end{prop}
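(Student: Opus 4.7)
The plan is to reduce to the computation $B(\Z(q))\cong \Z$ in $\D(\Z)$ for every integer $q$, and then to propagate this to general $A$ using additivity and the structure theorem for finitely generated abelian groups. Over $\Z$ (which is hereditary) any bounded complex of finitely generated abelian groups is quasi-isomorphic to the direct sum $\bigoplus_n \H_n(A)[n]$ of its shifted cohomology, and each $\H_n(A)$ is in turn a finite direct sum of copies of $\Z$ and of $\Z/m=\mathrm{cone}(\Z\xrightarrow{m}\Z)$.

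For the core computation, I would treat $q=1$ first by applying $B$ to the motivic splitting $\M(\P^1)\cong \Z(0)\oplus \Z(1)[2]$ recalled in Section \ref{section: mixed motives}. The symmetric monoidality of $B$ forces $B(\Z(0))\cong \Z$, and Proposition \ref{prop: Betti realization on smooth schemes} identifies $B(\M(\P^1))$ with $C_*(S^2,\Z)\simeq \Z\oplus \Z[2]$ in $\D(\Z)$. Cancelling the common $\Z$-summand then gives $B(\Z(1))\cong \Z$. For $q\geq 0$ the definition of $\Z(q)$ in Section \ref{section: mixed motives} identifies it with $\Z(1)^{\otimes q}$, so monoidality of $B$ yields $B(\Z(q))\cong \Z$; for $q<0$, applying $B$ to the tensor inverse relation $\Z(q)\otimes \Z(-q)\cong \Z(0)$ and using the previous case gives $B(\Z(q))\cong \Z$ as well.

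To pass from $\Z$ to a general $A$, I would use that $B$ and the twist $(-)(q)$ are both triangulated and $\Z$-linear, so they preserve finite direct sums and send cones of multiplication-by-$m$ maps to cones of the corresponding multiplication-by-$m$ maps. Consequently $B((\Z/m)(q))\cong \mathrm{cone}(m\colon B(\Z(q))\to B(\Z(q)))\cong \mathrm{cone}(m\colon \Z\to \Z)\cong \Z/m$, and the direct-sum decomposition of $A$ described above then delivers $B(A(q))\cong A$. The main point requiring any care is the triangulated $\Z$-linearity of the twist $(-)(q)\colon \D(\Z)\to \DM(\Z)$, but this is built into the construction of the $\D(\Z)$-tensoring on $\DM(\Z)$, and the remainder of the argument is completely formal.
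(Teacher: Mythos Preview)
Your proof is correct and follows essentially the same approach as the paper: reduce to the case $A=\Z$ and then compute $B(\Z(1))\cong\Z$ via the splitting of $\M(\P^1)$ together with the symmetric monoidality of $B$. The only cosmetic difference is that the paper phrases the reduction by observing that both $B\circ(-)(q)$ and $\id$ are triangulated functors on $\D_c(\Z)$, which is generated by $\Z$, whereas you unwind this explicitly via the hereditary structure of $\Z$ and the classification of finitely generated abelian groups.
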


\begin{proof}
Let $\D_c(\Z)$ be the full subcategory of $\D(\Z)$ spanned by bounded chain complexes of finitely generated abelian groups. Let $c_q:\D_c(\Z)\to \DM_{gm}(\Z)$ be the functor sending $A$ to $A(q)$. We want to prove that $B\circ c_q$ is isomorphic to the identity. Since both $B\circ c_q$ and $\id$ are triangulated, it suffices to show that they coincide on $\Z$. In other words, we are reduced to proving that $B(\Z(q))\cong \Z$. Since the functor $B$ is symmetric monoidal, the result is true for $q=0$ and it suffices to prove that $B(\Z(1))\cong \Z$. We have an isomorphism $\M(\P^1)\cong \Z(0)\oplus \Z(1)[2]$. Hence we have
\[\Z\oplus \Z[2]\cong B(\M(\P^1))\cong B(\Z)\oplus B(\Z(1))[2]\]
which  gives the desired answer.
\end{proof}

\section{Mixed Tate motives}

We denote by $\DMT(\Z)$ the full triangulated subcategory of $\DM(\Z)$ generated by the motives $\Z(q)$ for all integers $q$. We have the following identities in the category $\DMT(\Z)$:
\begin{align*}
\DMT(\Z)(\Z(i)[m],\Z(j)[n])&=0,\;\mathrm{if}\;j<i,\\
\DMT(\Z)(\Z(i)[m],\Z(i)[n])&=0,\;\mathrm{if}\;m\neq n,\\
\DMT(\Z)(\Z(i)[n],\Z(i)[n])&=\Z.
\end{align*}
They can be deduced from Theorem \ref{prop: reduction to fields} and the fact that similar results hold in $\DM(\Q)$ and $\DM(\F_p)$. It follows that the triangulated category $\DMT(\Z)$ satisfies the axioms of a triangulated category of Tate type \cite[Definition 1.1.]{levinetate}. The first consequence (\emph{cf.} \cite[Lemma 1.2.]{levinetate}) is that for any object $M$ in $\DMT(\Z)$, there is a functorial weight filtration
\[\ldots\to W_{\leq n}M\to W_{\leq n+1}M\to\ldots\to M\]
where $W_{\leq n}$ is the right adjoint to the inclusion functor $i_n$ from the full triangulated subcategory of $\DMT(\Z)$ generated by the objects $\Z(-q)$ with $q\leq n$. These maps fit into cofiber sequences
\[W_{\leq n-1}M\to W_{\leq n}M\to \gr_n^WM\]
in which $\gr_n^WM$ is of the form $A(-n)$ for some bounded chain complex of finitely generated $\Z$-module $A$. In fact, as observed by Kahn in \cite{kahnweight}, the functor $A\mapsto A(-n)$ is an equivalence from the triangulated category of bounded chain complexes of finitely generated $\Z$-modules to the full triangulated subcategory of $\DMT(\Z)$ generated by $\Z(-n)$. Note that Levine works with $\Q$-coefficients, however a careful inspection of his proof shows that it also works with $\Z$-coefficients. 

\begin{lemm}\label{lemm: boundedness of the weight filtration}
For any object $M$ of $\DMT(\Z)$, the weight filtration is bounded, i.e. $W_{\leq n}M=0$ for $n$ small enough and $W_{\leq n}M\to M$ is an isomorphism for $n$ big enough. 
\end{lemm}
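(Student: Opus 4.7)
The plan is to let $\mathcal{C}\subset\DMT(\Z)$ denote the full subcategory of objects whose weight filtration is bounded in both directions, and to verify two things: (i) every generator $\Z(q)$ lies in $\mathcal{C}$, and (ii) $\mathcal{C}$ is closed under shifts, cones, and retracts. Since by definition $\DMT(\Z)$ is the triangulated subcategory generated by the $\Z(q)$, (i) and (ii) will force $\mathcal{C}=\DMT(\Z)$ and the lemma will follow.

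For (i) the argument is essentially by inspection. Taking $p=-q$ shows that $\Z(q)$ already belongs to the source of the inclusion $i_n$ as soon as $n\geq -q$, so the counit $W_{\leq n}\Z(q)\to \Z(q)$ is an isomorphism for every such $n$. In the other direction, for $n<-q$ and any $p\leq n$ one has $q<-p$, so the first orthogonality identity recalled at the beginning of the section gives $\DMT(\Z)(\Z(-p)[k],\Z(q))=0$ for every integer $k$; by adjunction the mapping objects into $W_{\leq n}\Z(q)$ from all generators of the source of $i_n$ vanish, which forces $W_{\leq n}\Z(q)=0$.

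For (ii) I would invoke the fact that $W_{\leq n}$, being the right adjoint to the inclusion $i_n$ of a triangulated subcategory into a triangulated category, is itself a triangulated functor: it commutes with the suspension and sends distinguished triangles to distinguished triangles, by the standard Bousfield localization formalism for triangulated categories. Closure of $\mathcal{C}$ under shifts and under retracts is then immediate (functors preserve retracts). Given a cofiber sequence $M\to N\to P$ with two of the three terms in $\mathcal{C}$, applying $W_{\leq n}$ yields a distinguished triangle in which the two known terms vanish for $n$ sufficiently small and in which their counit maps to $M$, $N$, $P$ are isomorphisms for $n$ sufficiently large; in the first case the third term vanishes as well, and in the second the map of triangles together with the triangulated five lemma makes the remaining counit an isomorphism. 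So the third term lies in $\mathcal{C}$ too.

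The only genuine subtlety, and so the point that deserves the most care, is the triangulated compatibility of the functors $W_{\leq n}$ used in step (ii). This is a standard piece of general nonsense about right adjoints to inclusions of triangulated subcategories and is implicit in the weight filtration formalism of Levine and Kahn recalled just above the lemma, but it is worth making explicit because once it is granted the rest of the proof reduces to the direct computation on generators carried out in step (i).
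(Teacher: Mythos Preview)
Your proposal is correct and follows essentially the same route as the paper: the paper's proof is a one-line version of exactly your argument, asserting that the $W_{\leq n}$ are triangulated functors, so the class of objects with bounded weight filtration is a triangulated subcategory containing the generators and hence is everything. You have simply filled in the details the paper leaves implicit, namely the explicit verification on $\Z(q)$ and the 2-out-of-3 argument via the five lemma.
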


\begin{proof}
Since the functors $W_{\leq n}$ are triangulated functors, the class of objects whose weight filtration is bounded is a triangulated subcategory of $\DMT(\Z)$, moreover it contains all the generators, hence it has to be the whole category.
\end{proof}

\begin{theo}
The category $\DMT(\Z)$ satisfies the Beilinson--Soul\'e vanishing conjecture. That is, we have the identity
\[\DMT(\Z)(\Z(0)[0],\Z(q)[n])=0\]
whenever $q\geq 0$ and $n<0$ and whenever $q>0$ and $n\leq 0$.
\end{theo}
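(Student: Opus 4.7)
The plan is to reduce Beilinson--Soulé for $\on{Spec}(\Z)$ to the analogous statement for the fields $\F_p$ and $\Q$, where it is known. Since $\DMT(\Z)$ is a full subcategory of $\DM(\Z)$, Definition~\ref{defi: motivic cohomology} identifies the Hom group in question with the motivic cohomology group $\H^n(\on{Spec}(\Z),\Z(q))$. When $q=0$, the second of the three identities listed at the opening of this section directly gives $\H^n(\on{Spec}(\Z),\Z(0))=0$ for all $n\neq 0$, which disposes of that subcase.

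When $q>0$ and $n\leq 0$, I would apply Theorem~\ref{prop: reduction to fields} to obtain the exact sequence
\[\prod_p \H^{n-2}(\on{Spec}(\F_p),\Z(q-1))\to \H^n(\on{Spec}(\Z),\Z(q))\to \H^n(\on{Spec}(\Q),\Z(q)),\]
so it suffices to show the two outer terms vanish in our range. For the left term, Geisser--Levine's computation of the motivic cohomology of finite fields (equivalently, Quillen's computation of $K_\ast(\F_p)$ combined with Beilinson--Lichtenbaum) shows that $\H^m(\on{Spec}(\F_p),\Z(r))=0$ whenever $m\neq r$; since here $m=n-2\leq -2$ and $r=q-1\geq 0$, the two indices cannot coincide. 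For the right term, the vanishing $\H^n(\on{Spec}(\Q),\Z(q))=0$ for $n\leq 0$, $q>0$ is Beilinson--Soulé for $\Q$, which assembles Borel's rational computation of $K_\ast(\Z)$ (and the localization sequence comparing $K_\ast(\Z)$ to $K_\ast(\Q)$) with the integral refinement obtained from Bloch--Kato (Voevodsky--Rost), which controls the torsion via the étale cohomology of $\Q$.

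The main obstacle is bibliographic rather than mathematical: Beilinson--Soulé for $\Q$ in its integral form is a package of several deep theorems, and one has to cite them precisely. A minor additional point to verify is that Theorem~\ref{prop: reduction to fields} is only asserted to be exact at the middle term, so we genuinely need vanishing on both flanks rather than, for instance, vanishing of one side paired with an injectivity statement.
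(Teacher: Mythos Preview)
Your approach is essentially the paper's: reduce via Theorem~\ref{prop: reduction to fields} to Beilinson--Soul\'e for $\Q$ and for each $\F_p$, then invoke the known results over those fields. The paper organises the field cases slightly differently---it applies Kahn's reduction to check the vanishing separately with $\Q$-coefficients (Borel for number fields, Quillen for finite fields) and with $\Z/\ell$-coefficients (Beilinson--Lichtenbaum via Voevodsky, and Geisser--Levine for $\ell=p$ over $\F_p$)---but the substance is the same.

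One correction is needed. Your claim that $\H^m(\on{Spec}(\F_p),\Z(r))=0$ whenever $m\neq r$ is false: for every $r\geq 1$ one has $\H^1(\on{Spec}(\F_p),\Z(r))\cong \Z/(p^{\,r}-1)\neq 0$, so the motivic cohomology of a finite field is not concentrated on the diagonal. What you actually need is only the vanishing for $m=n-2\leq -2$ and $r=q-1\geq 0$, i.e.\ in negative cohomological degree, and that \emph{is} exactly Beilinson--Soul\'e for $\F_p$ (which does hold, by the combination of Quillen, Geisser--Levine, and Beilinson--Lichtenbaum you cite). So the logic survives, but the stated justification for the left-hand term should be replaced by the correct statement that $\H^m(\on{Spec}(\F_p),\Z(r))=0$ for $m\leq 0$ and $r>0$, and for $m<0$ when $r=0$.
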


\begin{proof}
This statement also appears as \cite[Theorem 2.10.]{souderesmotivic}. According to Theorem \ref{prop: reduction to fields}, it suffices to prove that $\DMT(\Q)$ and all the categories $\DMT(\F_p)$ satisfy the Beilinson--Soul\'e vanishing conjecture with $\Z$ coefficients. As explained in \cite[Lemma 24]{kahnalgebraic}, we are reduced to proving that, for $k=\Q$ and $k=\F_p$, the category $\DMT(k)_{\Z/l}$ satisfies the Beilinson--Soul\'e vanishing conjecture for all prime number $l$ and that $\DMT(k)_{\Q}$ satisfies the Beilinson--Soul\'e vanishing conjecture. The case of $\Z/l$ with $l$ prime to the exponent characteristic of $k$ follows from the Beilinson--Lichtenbaum conjecture which has now been proved by Voevosdky in \cite{voevodskymotivic}. The case $\DMT(\F_p)_{\Z/p}$ follows from \cite[Theorem 1.1.]{geisserktheory}.

For $k$ a field, we have the identity
\[\DMT(k)_{\Q}(\Q(0)[0],\Q(i)[2j-i])\cong (K_i(k)\otimes\Q)^{(j)}\]
where the right hand side denotes the weight $j$ eigenspace of the Adams operations action on algebraic $K$-theory. For finite fields, work of Quillen has shown that $K_*(k)\otimes \Q$ is trivial which implies the vanishing conjecture. For $k=\Q$ or more generally a number field, Borel has explicitly computed $K_*(k)\otimes\Q$ and it follows from his computation that the vanishing conjecture is also true in this case (see \cite{kahnalgebraic} for references).
\end{proof}

We denote by $\DMT(\Z)_{\geq 0}$ (resp. $\DMT(\Z)_{<0}$) the full subcategory of $\DMT(\Z)$ spanned by objects $M$ such that for each integer $n$, the motive $\gr^W_n(M)$ has homology concentrated in nonnegative (resp. negative) degree.

\begin{theo}[Levine]
The pair $(\DMT(\Z)_{\geq 0},\DMT(\Z)_{<0})$ is a $t$-structure on $\DMT(\Z)$.
\end{theo}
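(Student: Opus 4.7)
The plan is to verify the three axioms of Definition \ref{defi: t-structure} for the pair $(\DMT(\Z)_{\geq 0}, \DMT(\Z)_{<0})$. Stability under shifts is immediate: $\gr_n^W$ commutes with $[1]$, and the standard $t$-structure on the bounded derived category of finitely generated $\Z$-modules has the corresponding stability. It thus remains to establish orthogonality and the existence of truncation triangles.

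For orthogonality, let $U \in \DMT(\Z)_{\geq 0}$ and $V \in \DMT(\Z)_{<0}$. Applying the cofiber triangles $W_{\leq n-1} \to W_{\leq n} \to \gr_n^W$ first in $V$ and then in $U$, and inducting over the bounded range of the two weight filtrations via Lemma \ref{lemm: boundedness of the weight filtration}, one reduces the vanishing of $\DMT(\Z)(U,V)$ to that of $\DMT(\Z)(A(-m), B(-n))$ for $A$ (resp.\ $B$) a bounded complex of finitely generated abelian groups with homology in nonnegative (resp.\ strictly negative) degrees. A further dévissage on $A$ and $B$, using shifts of $\Z$ as generators and the triangle $\Z \xrightarrow{p} \Z \to \Z/p$ to eliminate torsion, reduces matters to
\[\DMT(\Z)(\Z(-m)[a], \Z(-n)[b]) = 0\]
for $a \geq 0$ and $b < 0$. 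If $m < n$ this is the first displayed identity at the start of the section. If $m = n$ it is the second (since $a \neq b$). If $m > n$ the group is isomorphic to $\H^{b-a}(\on{Spec}(\Z), \Z(m-n))$ with $m - n > 0$ and $b - a < 0$, and vanishes by the Beilinson--Soulé theorem just established.

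For the truncation triangles, I would argue by induction on the length of the weight filtration (finite by Lemma \ref{lemm: boundedness of the weight filtration}) that every $M \in \DMT(\Z)$ fits into a cofiber triangle $M^{\geq 0} \to M \to M^{<0}$ with the endpoints in $\DMT(\Z)_{\geq 0}$ and $\DMT(\Z)_{<0}$ respectively. For the base case $M = A(-n)$ pure Tate, one simply applies the standard truncation of $A$ in $\D(\Z)$ and Tate twists by $(-n)$; both endpoints are pure Tate and visibly lie in the correct subcategories. For the inductive step, the cofiber triangle $W_{\leq n-1}M \to M \to \gr_n^W M$ provides endpoints to which the inductive hypothesis applies. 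The orthogonality just proved forces the composite $(W_{\leq n-1}M)^{\geq 0} \to W_{\leq n-1}M \to M \to \gr_n^W M \to (\gr_n^W M)^{<0}$ to vanish, and a $3 \times 3$ diagram assembled via the octahedral axiom then produces a truncation triangle for $M$ whose left term is an extension of $(\gr_n^W M)^{\geq 0}$ by $(W_{\leq n-1}M)^{\geq 0}$ and whose right term is an extension of $(\gr_n^W M)^{<0}$ by $(W_{\leq n-1}M)^{<0}$.

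The main obstacle will be this gluing step. The cleanest formulation is to show that the class of objects admitting such a cofiber triangle is closed under extensions (via the octahedral axiom together with the orthogonality above), hence is a triangulated subcategory of $\DMT(\Z)$; since it contains all pure Tate generators it exhausts $\DMT(\Z)$. This is essentially Levine's strategy in \cite{levinetate}, which works over any base satisfying Beilinson--Soulé vanishing with the given coefficients; the only new input needed is the Beilinson--Soulé theorem over $\on{Spec}(\Z)$ with $\Z$-coefficients, which is the preceding result.
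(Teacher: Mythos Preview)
Your proposal is correct and is essentially a sketch of Levine's argument in \cite[Theorem~1.4]{levinetate}, which is exactly what the paper cites: the paper's own proof consists solely of that reference together with the observation that the argument works unchanged with $\Z$-coefficients. You have correctly identified the Beilinson--Soul\'e vanishing over $\on{Spec}(\Z)$ (the theorem immediately preceding this one) as the only additional input required beyond the Tate-type axioms.

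One minor point of phrasing: ``closed under extensions \ldots\ hence is a triangulated subcategory'' overstates what closure under extensions yields. But you do not actually need a triangulated subcategory here. What you need, and what you have, is that every object of $\DMT(\Z)$ is a finite iterated extension of pure Tate objects, which follows from Lemma~\ref{lemm: boundedness of the weight filtration}; closure of the class under extensions then suffices to exhaust $\DMT(\Z)$. The $3\times 3$ gluing step does go through, though the details take some care; this is where Levine's paper does the real work.
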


\begin{proof}
See \cite[Theorem 1.4.]{levinetate}. Again, Levine restricts to $\Q$ coefficients but this restriction plays no role in the proof.
\end{proof}

Recall that we have a Betti realization functor $B:\DM_{gm}(\Z)\to \D(\Z)$, we still denote by $B$ its restriction to $\DMT(\Z)\subset\DM_{gm}(\Z)$.

\begin{prop}\label{prop: B compatible with t-structure}
The functor $B$ is compatible with the $t$-structure (Definition \ref{defi: t-structure functor}).
\end{prop}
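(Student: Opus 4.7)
The plan is to reduce the claim to the graded pieces of the weight filtration and then invoke Proposition \ref{prop: Betti realization of Tate twists}. Let $M$ be an object of $\DMT(\Z)_{\geq 0}$. By Lemma \ref{lemm: boundedness of the weight filtration} the weight filtration
\[\ldots\to W_{\leq n-1}M\to W_{\leq n}M\to\ldots\to M\]
is bounded: it vanishes for $n\ll 0$ and stabilizes to $M$ for $n\gg 0$. I would argue by induction on $n$ that $B(W_{\leq n}M)$ lies in $\D(\Z)_{\geq 0}$. The base case is trivial since $W_{\leq n}M=0$ eventually. For the inductive step, the distinguished triangle
\[W_{\leq n-1}M\to W_{\leq n}M\to \gr_n^W M\]
is sent by the triangulated functor $B$ to a distinguished triangle in $\D(\Z)$, and since $\D(\Z)_{\geq 0}$ is closed under extensions it suffices to check that $B(\gr_n^W M)\in \D(\Z)_{\geq 0}$.

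By definition of the $t$-structure on $\DMT(\Z)$, we have $\gr_n^W M\cong A(-n)$ for a bounded chain complex $A$ of finitely generated abelian groups concentrated in nonnegative homological degrees. Proposition \ref{prop: Betti realization of Tate twists} then gives $B(\gr_n^W M)=B(A(-n))\cong A$, which indeed lies in $\D(\Z)_{\geq 0}$. Iterating finitely many times we reach $n$ large enough that $W_{\leq n}M\cong M$, and conclude $B(M)\in \D(\Z)_{\geq 0}$.

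The argument for the negative part is entirely parallel: if $M\in \DMT(\Z)_{<0}$, then each $\gr_n^W M\cong A(-n)$ with $A$ concentrated in negative degrees, so $B(\gr_n^W M)\cong A\in \D(\Z)_{<0}$, and closure of $\D(\Z)_{<0}$ under extensions in triangles together with the same induction on the bounded weight filtration gives $B(M)\in \D(\Z)_{<0}$.

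There is no real obstacle; the proof is a formal consequence of three inputs that have already been established: boundedness of the weight filtration, the computation $B(A(q))\cong A$ of Proposition \ref{prop: Betti realization of Tate twists}, and the fact that both halves of the standard $t$-structure on $\D(\Z)$ are closed under extensions. The only minor point to keep in mind is that $B$ must be applied to a genuine distinguished triangle, which is automatic because $B$ is triangulated.
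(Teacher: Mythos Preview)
Your proof is correct and follows essentially the same approach as the paper: both induct along the bounded weight filtration, using Proposition~\ref{prop: Betti realization of Tate twists} to identify $B(\gr_n^W M)\cong A$ and then closing up under extensions in $\D(\Z)_{\geq 0}$ (resp.\ $\D(\Z)_{<0}$). The only cosmetic difference is that you state the extension-closure property explicitly, whereas the paper leaves it implicit in the phrase ``we find that $B(W_{\leq n+q+1}M)$ is in $\D(\Z)_{\geq 0}$''.
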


\begin{proof}
We prove that $B(\DMT(\Z)_{\geq 0})\subset \D(\Z)_{\geq 0}$, the other case is similar. Let $M$ be a motive in $\DMT(\Z)$, then according to Lemma \ref{lemm: boundedness of the weight filtration}, $M$ has a finite filtration.
\[0=W_{\leq n}(M)\to W_{\leq n+1}(M)\to\ldots \to W_{\leq n+m}M=M\]
Assume that $M$ is in $\DMT(\Z)_{\geq 0}$. We show by induction on $q$ that $B(W_{\leq n+q}M)$ in in $\D(\Z)_{\geq 0}$. For $q=1$, this follows from Proposition \ref{prop: Betti realization of Tate twists} and the fact that $W_{\leq n+1}M$ is isomorphic to $A(-n-1)$ with $A$ a chain complex with zero homology in negative degree. Assume that this is true for $q$. We have a cofiber sequence
\[W_{\leq n+q}M\to W_{\leq n+q+1}M\to \gr^W_{n+q+1}M\]
with $\gr^W_{n+q+1}M$ isomorphic to $A(-n-q-1)$ with $A$ a chain complex whose homology is zero in negative degree. Applying $B$ to this cofiber sequence, we find that $B(W_{\leq n+q+1}M)$ is in $\D(\Z)_{\geq 0}$ as desired.  
\end{proof}

\begin{prop}\label{prop: B conservative}
The functor $B$ reflects the zero object.
\end{prop}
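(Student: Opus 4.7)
The plan is to reduce the problem in two steps: first, using compatibility of $B$ with the $t$-structure, to showing that $B$ is conservative on the heart $\DMT(\Z)_\heartsuit$; and second, on the heart, to using the weight filtration to detect each Tate graded piece of the object.

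For the first reduction, I would invoke Proposition \ref{prop: B compatible with t-structure}: since $B$ preserves both halves of the $t$-structure, the uniqueness of the truncation triangle (Remark \ref{rem: truncation functors}) implies that $B$ commutes with the truncation functors $t_{\geq d}$ and $t_{<d}$, and hence with the cohomological functors $H_n^t$. If $B(M) = 0$, this gives $B(H_n^t M) \cong H_n^t B(M) = 0$ for every integer $n$. By Lemma \ref{lemm: boundedness of the weight filtration} every object of $\DMT(\Z)$ admits a finite weight filtration with graded pieces $A_n(-n)$ for bounded complexes $A_n$ of finitely generated abelian groups; stringing together the long exact sequences of the filtration forces the $t$-cohomology of $M$ to be concentrated in a bounded range of degrees. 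Consequently, vanishing of every $H_n^t M$ is enough to deduce $M = 0$, and the problem reduces to showing that $B$ restricted to $\DMT(\Z)_\heartsuit$ is conservative.

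For an object $N$ in the heart, the definition of the $t$-structure forces each $A_n$ in $\gr_n^W N \cong A_n(-n)$ to be an abelian group concentrated in degree $0$. The weight truncations $W_{\leq n}N$ have graded pieces forming a subset of those of $N$, so each $W_{\leq n}N$ again lies in $\heartsuit$. Applying $B$, which is $t$-exact and satisfies $B(A_n(-n)) \cong A_n$ by Proposition \ref{prop: Betti realization of Tate twists}, to the cofiber sequence
\[
W_{\leq n-1}N \to W_{\leq n}N \to A_n(-n)
\]
produces a cofiber sequence in $\D(\Z)$ with all three terms in $\D(\Z)_\heartsuit$, so the long exact sequence in homology collapses to a short exact sequence of abelian groups
\[
0 \to B(W_{\leq n-1}N) \to B(W_{\leq n}N) \to A_n \to 0.
\]

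To conclude I would induct downward on the finitely many nonzero weight indices: if $B(N) = 0$, the short exact sequence for the top weight forces both $A_n$ and $B(W_{\leq n-1}N)$ to vanish, and iterating gives $A_n = 0$ for every $n$, hence $\gr_n^W N = 0$ for every $n$, hence $N = 0$. The conceptual content of this strategy is that although $B$ collapses every Tate twist to $\Z$, on a heart object the motivic weight filtration lifts to an honest filtration of $B(N)$ as an abelian group, so vanishing of the total group forces vanishing of each graded piece. The most delicate ingredient is the strict commutation of $B$ with the truncation functors, which rests on the uniqueness assertion in Remark \ref{rem: truncation functors}.
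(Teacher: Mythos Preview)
Your proposal is correct and follows essentially the same two-step strategy as the paper: first reduce to the heart, then on the heart use the weight filtration and Proposition~\ref{prop: Betti realization of Tate twists} to peel off the top graded piece and induct downward. The heart argument is identical to the paper's.

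The only organizational difference is in the reduction step. The paper argues inductively that if $t_{\geq n}X\to X$ is an isomorphism then so is $t_{\geq n+1}X\to X$, pushing $X$ into $\bigcap_n\DMT(\Z)_{\geq n}$ and invoking non-degeneracy of the $t$-structure \cite[Theorem 1.4.i]{levinetate}. You instead observe directly that $B$ commutes with the cohomology functors $H_n^t$ (via Remark~\ref{rem: truncation functors}), and then use Lemma~\ref{lemm: boundedness of the weight filtration} to see that every object of $\DMT(\Z)$ is $t$-bounded, so vanishing of all $H_n^t M$ forces $M=0$. Your route makes the boundedness input more explicit; the paper's route leaves it implicit in the existence of a base case for the induction. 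Both are valid and amount to the same content.
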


\begin{proof}
The previous proposition implies that $B$ sends $\DMT(\Z)_{\heartsuit}$ to $\D(\Z)_{\heartsuit}$. We denote by $B_\heartsuit$ the restriction of $B$ to $\DMT(\Z)_{\heartsuit}$. By Proposition \ref{prop: t-exact functor induces exact functor on the heart}, the functor $B_{\heartsuit}$ is an exact functor between abelian categories. We first prove that $B_\heartsuit$ reflects the zero object. Let $X$ be an object of $\DMT(\Z)_\heartsuit$ that is sent to $0$ by $B_\heartsuit$. According to \cite[Theorem 1.4.]{levinetate}, the weight filtration for $X$ has the form
\[0=W_{\leq n}X\subset W_{\leq n+1}X\subset\ldots\subset W_{\leq n+q}X=X\] 
In particular, we have a short exact sequence
\[0\to B_\heartsuit(W_{\leq n+q-1}X)\to B_\heartsuit(W_{\leq n+q}X)\to B_\heartsuit(\gr^W_{n+q}X)\to 0\]
Since we have assumed that $B_\heartsuit(W_{\leq n+q}X)=0$, we find that $B_\heartsuit(W_{\leq n+q-1}X)=0$ and $B_\heartsuit(\gr^W_{n+q}X)=0$. The object $\gr^W_{n+q}X$ is isomorphic to $A(-n-q)$ with $A$ a finitely generated abelian group. Therefore, by Proposition \ref{prop: Betti realization of Tate twists}, $B_\heartsuit(\gr^W_{n+q}X)\cong A$. Since $B_\heartsuit(\gr^W_{n+q}X)$ is trivial, we find that $\gr^W_{n+q}(X)=0$. An easy inductive argument shows that $B_\heartsuit(X)=0$.

Now we prove that $B$ reflects the zero object. Let $X\in\DMT(\Z)$ be such that $B(X)=0$. Assume that the map $t_{\geq n}X\to X$ is an isomorphism. Then, there is a cofiber sequence
\[t_{\geq n+1}X\to t_{\geq n} X\to Y[n]\]
where $Y$ is in $\DMT(\Z)_\heartsuit$. Applying $B$ to this cofiber sequence, we find a cofiber sequence
\[B(t_{\geq n+1} X)\to B(t_{\geq n}X)\to B_\heartsuit(Y)[n]\]
Looking at the induced exact sequence in degree $n$ and $n-1$, we find that $B_\heartsuit(Y)$ is $0$ which implies by the previous paragraph that $Y=0$. Hence, we have proved that $t_{\geq n+1}X\to X$ is an isomorphism. An obvious induction then shows that $X$ is in $\bigcap_{n \in\Z}\DMT(\Z)_{\geq n}$ and hence is $0$ by non-degeneracy of the $t$-structure \cite[Theorem 1.4.i]{levinetate}.
\end{proof}

\begin{coro}
Same notations, a mixed Tate motive is in $\DMT(\Z)_{\geq 0}$ if and only if $B(X)$ is in $\D(\Z)_{\geq 0}$.
\end{coro}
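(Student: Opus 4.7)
The forward direction is immediate from Proposition \ref{prop: B compatible with t-structure}, so the content is the converse: if $B(X)\in \D(\Z)_{\geq 0}$ then $X\in\DMT(\Z)_{\geq 0}$.

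My plan is to use the truncation triangle for the motivic $t$-structure together with the conservativity statement of Proposition \ref{prop: B conservative}. Concretely, given $X\in\DMT(\Z)$, consider the distinguished triangle
\[t_{\geq 0}X\to X\to t_{<0}X.\]
Applying the triangulated functor $B$ yields a distinguished triangle
\[B(t_{\geq 0}X)\to B(X)\to B(t_{<0}X)\]
in $\D(\Z)$. By Proposition \ref{prop: B compatible with t-structure}, $B(t_{\geq 0}X)\in \D(\Z)_{\geq 0}$ and $B(t_{<0}X)\in\D(\Z)_{<0}$.

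Now assume $B(X)\in\D(\Z)_{\geq 0}$. Then the trivial triangle $B(X)\to B(X)\to 0$ also realizes $B(X)$ as the extension of an object in $\D(\Z)_{\geq 0}$ by an object in $\D(\Z)_{<0}$. By the uniqueness statement in Remark \ref{rem: truncation functors} (i.e.\ unicity of the truncation triangle in a $t$-structure), this triangle is isomorphic to the one above, so in particular $B(t_{<0}X)\cong 0$ in $\D(\Z)$. Applying Proposition \ref{prop: B conservative} to $t_{<0}X$, we conclude $t_{<0}X=0$, and hence $X\cong t_{\geq 0}X\in\DMT(\Z)_{\geq 0}$, which is the desired conclusion.

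There is no real obstacle here; the proof is essentially a formal diagram chase once one has Propositions \ref{prop: B compatible with t-structure} and \ref{prop: B conservative} in hand. The only subtle point is to invoke the uniqueness of the truncation triangle (Remark \ref{rem: truncation functors}) correctly in $\D(\Z)$, rather than trying to compare $t_{<0}X$ with an already-constructed truncation in $\DMT(\Z)$ and push it across $B$.
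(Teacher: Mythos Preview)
Your proof is correct and essentially identical to the paper's. The only cosmetic difference is in how you conclude $B(t_{<0}X)=0$: you invoke uniqueness of the truncation triangle (Remark \ref{rem: truncation functors}) to identify $B(t_{<0}X)$ with $t_{<0}B(X)=0$, whereas the paper argues directly that the cofiber $B(t_{<0}X)$ of a map between objects of $\D(\Z)_{\geq 0}$ lies in $\D(\Z)_{\geq 0}$, and then intersects with $\D(\Z)_{<0}$ to get zero. These are two phrasings of the same formal fact about $t$-structures.
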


\begin{proof}
One direction is given by Proposition \ref{prop: B compatible with t-structure}. Now assume that $B(X)\in\D(\Z)_{\geq 0}$. We have a cofiber sequence
\[t_{\geq 0}X\to X\to t_{<0}X\]
Applying $B$, we find a cofiber sequence
\[B(t_{\geq 0} X)\to B(X)\to B(t_{<0}X)\]
From this cofiber sequence, we deduce that $B(t_{<0}X)$ is in $\D(\Z)_{\geq 0}$ and from Proposition \ref{prop: B compatible with t-structure}, we find that $B(t_{<0}X)$ is in $\D(\Z)_{<0}$. It follows that $B(t_{<0}X)=0$ and by Proposition \ref{prop: B conservative}, we deduce that $t_{<0}X=0$.
\end{proof}

\begin{coro}\label{coro: B detects the t-structure}
Let $f:M\to N$ be a map in $\DMT(\Z)$. Then $t_{<n}(f)$ is an isomorphism if and only if $t_{<n}B(f)$ is an isomorphism.
\end{coro}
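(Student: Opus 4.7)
The plan is to reduce the claim to two properties of $B$ that are already established: its compatibility with the $t$-structures (Proposition \ref{prop: B compatible with t-structure}) and the fact that it reflects the zero object on $\DMT(\Z)$ (Proposition \ref{prop: B conservative}).

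First, I would show that truncation commutes with $B$, in the sense that there is a natural isomorphism $B(t_{<n} X) \cong t_{<n} B(X)$ for every $X \in \DMT(\Z)$. To produce it, apply the triangulated functor $B$ to the truncation triangle
\[t_{\geq n} X \to X \to t_{<n} X.\]
Compatibility of $B$ with the $t$-structure places the first term in $\D(\Z)_{\geq n}$ and the third term in $\D(\Z)_{<n}$, so by the uniqueness clause in Remark \ref{rem: truncation functors} this distinguished triangle in $\D(\Z)$ is canonically isomorphic to the truncation triangle of $B(X)$. Applying this natural isomorphism to the morphism $f$ identifies $B(t_{<n} f)$ with $t_{<n} B(f)$, so the corollary reduces to showing that $t_{<n}(f)$ is an isomorphism in $\DMT(\Z)$ if and only if its image under $B$ is an isomorphism in $\D(\Z)$.

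Second, I would invoke the standard fact that a morphism $g$ in a triangulated category is an isomorphism exactly when its cofiber vanishes. Since $B$ is a triangulated functor, it preserves cofibers. The cofiber of $t_{<n}(f)$ lies in $\DMT(\Z)$ (the category is triangulated and closed under truncations), so Proposition \ref{prop: B conservative} applies and forces this cofiber to vanish whenever its image under $B$ does. This gives the equivalence $t_{<n}(f)$ iso $\iff$ $B(t_{<n}(f))$ iso, which combined with the first step finishes the argument.

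The only non-formal ingredient is the commutation of $B$ with $t_{<n}$, but once one is willing to use the uniqueness of the truncation triangle this is immediate from $t$-exactness, so I do not expect a real obstacle here.
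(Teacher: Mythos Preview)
Your argument is correct. You first show that $B$ commutes with $t_{<n}$ (using $t$-exactness together with the uniqueness clause in Remark~\ref{rem: truncation functors}), and then apply Proposition~\ref{prop: B conservative} to the cofiber of $t_{<n}(f)$. The paper instead passes through the cofiber $Q$ of $f$ itself: it asserts that $t_{<n}(f)$ is an isomorphism iff $Q\in\DMT(\Z)_{\geq n}$, invokes the preceding corollary to translate this to $B(Q)\in\D(\Z)_{\geq n}$, and then reads off the conclusion from the long exact sequence attached to $B(M)\to B(N)\to B(Q)$. Your route is in fact the more robust one: the equivalence ``$t_{<n}(f)$ is an isomorphism $\Leftrightarrow$ $Q\in\cat{T}_{\geq n}$'' that the paper uses twice only holds in one direction in general---for a distinguished triangle $M\xrightarrow{f}N\to Q$, having $Q\in\cat{T}_{\geq n}$ forces $\pi_k(f)$ to be an isomorphism for $k<n-1$ and surjective for $k=n-1$, but not injective there (take $M=\Z(0)[n-1]$, $N=0$, $Q=\Z(0)[n]$). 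By working with the cofiber of the truncated map rather than with $Q$, you sidestep this issue entirely, and your argument applies verbatim to any $t$-exact triangulated functor between triangulated categories with $t$-structure that reflects the zero object.
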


\begin{proof}
Let us assume that $t_{<n}(f)$ is an isomorphism. Let us complete $f$ in a cofiber sequence
\[M\xrightarrow{f} N\to Q\]
Hitting this cofiber sequence with the triangulated functor $\gr_q^W$ for all $q$, we find that $t_{<n}(f)$ is an isomorphism if and only if $\gr^W_qQ$ has homology concentrated in degree $\geq n$ for all $q$. In other words, $t_{<n}(f)$ is an isomorphism if and only if $Q$ is in $\DMT(\Z)_{\geq n}$. By the previous corollary, this happens if and only if $B(Q)$ is in $\D(\Z)_{\geq n}$. Writing the long exact sequence associated to the cofiber sequence
\[B(M)\xrightarrow{B(f)}B(N)\to B(Q),\]
we find that this happens if and only if $t_{<n}B(f)$ is an isomorphism.
\end{proof}

\section{Configuration spaces and the scanning map}\label{section: scanning}

Given two monic polynomials $f=x^d+a_{d-1}x^{d-1}+\ldots+a_0$ and $g=x^d+b_{d-1}x^{d-1}+\ldots+b_0$ with coefficients in a commutative ring $R$, one can form their resultant $\on{Res}(f,g)$. A definition can be found in \cite[IV,8]{langalgebra}. The important property of this construction is that, if $R$ is a field, the resultant is zero if and only if the two polynomials have a common root in some extension of $R$ (\emph{cf.} \cite[IV, Corollary 8.4.]{langalgebra}).

We can then define $F_d$ as the Zariski open subset of $\A^{2d}=\mathrm{Spec}(\Z[a_0,\ldots,a_{d-1},b_0,\ldots,b_{d-1}])$ complement of the hypersurface of equation
\[\mathrm{Res}(x^d+a_{d-1}x^{d-1}+\ldots+a_0,x^d+b_{d-1}x^{d-1}+\ldots+b_0)=0\]
Thus, for $k$ a field, the $k$-points $F_d(k)$ is the set of pairs of monic polynomials $(f,g)$ with no common factor. We think of $F_d$ as the scheme of degree $d$ maps from $\P^1$ to $\P^1$ sending $\infty$ to $1$. A point $(f,g)$ of $F_d$ represents the map $x\mapsto \frac{f(x)}{g(x)}$ from $\P^1$ to $\P^1$.

We denote by $\Map_*(\P^1(\C), \P^1(\C))$ the topological space of maps from $\P^1(\C)$ to $\P^1(\C)$ sending the point at $\infty$ to $1$. We have an isomorphism
\[\pi_0[\Map_*(\P^1(\C),\P^1(\C))]\cong \pi_0(\Omega^2S^2)\cong \pi_2(S^2)\cong\Z\]
This isomorphism is implemented by the degree. We denote by $\Map_*(\P^1(\C),\P^1(\C))_d$ the component corresponding to the integer $d$. Denoting $F_d(\C)$ the set of complex points of $F_d$ with its complex manifold structure, there is a continuous inclusion
\[F_d(\C)\to\Map_*(\P^1(\C),\P^1(\C))_d\]
which sends $(f,g)$ to the meromorphic function $\frac{f}{g}$.

There is a map $s_d:\A^d\to \A^{2d}$ given by the formula
\[s_d(a_0,\ldots,a_{d-1})=(a_0,\ldots,a_{d-1},a_0+a_1,a_1+2a_2,\ldots,a_{d-2}+(d-1)a_{d-1},a_{d-1}+d)\]
If we think as $\A^d(R)$ as the set of monic polynomials over $R$ of degree $d$ and as $\A^{2d}(R)$ as the set of pairs of monic polynomials over $R$ of degree $d$, the map $s_d$ sends the polynomial $f$ to $(f,f+f')$.

We define the scheme $C_d$ by the following pullback square in schemes
\[
\xymatrix{
C_d\ar[d]\ar[r]&F_d\ar[d]\\
\A^d\ar[r]_{s_d}&\A^{2d}
}
\]
where the map $F_d\to\A^{2d}$ is the obvious inclusion. For $k$ a field, the set $C_d(k)$ is the set of monic degree $d$ polynomials over $k$ such that $f$ and $f+f'$ are coprime. Equivalently, this is the set of polynomials $f$ that are coprime with their derivative. In particular, if $k$ is an algebraically closed field, $C_d(k)$ gets identified with the set of unordered configurations of $d$ distinct points in $k$ by sending a polynomial $f$ to its set of roots. If $k=\C$, the complex manifold $C_d(\C)$ is exactly the complex manifold of unordered configurations of points in the complex line.

By definition, the scheme $C_d$ comes with a map $C_d\to F_d$. We still denote this map by $s_d$ as this is just the restriction of $s_d$ to an open subset of $\A^d$. We call it the degree $d$ scanning map. This is justified by the observation that the composite
\[C_d(\C)\xrightarrow{s_d(\C)}F_d(\C)\to\Map_*(\P^1(\C),\P^1(\C)_d\]
is homotopic to the topological scanning map described by Segal in \cite[Section 3]{segalconfiguration}.

\section{Proof of the main theorem}

Our first job is to prove that the motives $\M(C_d)$ and $\M(F_d)$ are in $\DMT(\Z)$. We will rely on the following proposition.

\begin{prop}\label{prop: filtration}
Let $\varnothing=Z_0\subset Z_1\subset \ldots\subset Z_n\subset X$ be a stratification of a smooth scheme $X$ by closed subschemes. Assume that $\M(X)$ is mixed Tate, that for all $k$, the scheme $Z_{k}-Z_{k-1}$ is smooth and that the motive $\M(Z_k-Z_{k-1})$ is mixed Tate. Then $\M(X-Z_n)$ is mixed Tate.
\end{prop}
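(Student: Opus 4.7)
The natural approach is induction on $n$, using the Gysin triangle (Theorem \ref{theo: Gysin}) as the main engine and the fact that $\DMT(\Z)$ is a full triangulated subcategory of $\DM(\Z)$ closed under Tate twists and shifts.

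The base case $n=0$ is trivial, since $X-Z_0=X$ is mixed Tate by hypothesis. For the inductive step, write $U_{k}=X-Z_{k}$, so that $U_0=X$ is mixed Tate and $U_k=U_{k-1}-(Z_k-Z_{k-1})$. Because $Z_{k-1}$ is closed in $Z_k$, the locally closed subscheme $Z_k-Z_{k-1}$ of $X$ is actually closed in the open subscheme $U_{k-1}=X-Z_{k-1}$, and it is smooth by hypothesis. Assuming $\M(U_{k-1})$ is mixed Tate, the Gysin triangle applied to the pair $(U_{k-1},Z_k-Z_{k-1})$ provides a cofiber sequence
\[\M(U_k)\to \M(U_{k-1})\to \M(Z_k-Z_{k-1})(c)[2c],\]
where $c$ is the codimension of $Z_k-Z_{k-1}$ in $U_{k-1}$. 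The right-hand term is mixed Tate because $\M(Z_k-Z_{k-1})$ is by assumption and $\DMT(\Z)$ is closed under $(c)[2c]$. Rotating, $\M(U_k)$ sits in a distinguished triangle between two mixed Tate motives, hence is itself in the triangulated subcategory $\DMT(\Z)$. Iterating through $k=1,\ldots,n$ gives the proposition.

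The one subtle point is that Theorem \ref{theo: Gysin} is stated for a closed subscheme of fixed codimension, whereas the different connected components of the smooth locally closed stratum $Z_k-Z_{k-1}$ could in principle sit in $U_{k-1}$ with different codimensions. Since $Z_k-Z_{k-1}$ is smooth, its connected components are equidimensional and pairwise disjoint (so in particular each connected component is simultaneously open and closed in $Z_k-Z_{k-1}$), so the motive $\M(Z_k-Z_{k-1})$ decomposes as a direct sum over components and one applies the Gysin triangle to each component separately. This is really the only nontrivial point: once it is dispensed with, the rest of the argument is a clean two-out-of-three invocation in the triangulated subcategory $\DMT(\Z)\subset\DM(\Z)$.
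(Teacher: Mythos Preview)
Your proof is correct and follows essentially the same argument as the paper: induction on $k$ with $U_k=X-Z_k$, applying the Gysin triangle to the closed inclusion $Z_k-Z_{k-1}\hookrightarrow U_{k-1}$ and concluding by two-out-of-three in the triangulated subcategory $\DMT(\Z)$. Your extra paragraph on the possibility of varying codimension across connected components is a nice bit of care that the paper's proof does not make explicit.
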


\begin{proof}
We prove by induction on $k$ that $\M(X-Z_k)$ is mixed Tate. The case $k=0$ follows from the hypothesis. Assume  that $\M(X-Z_k)$ is mixed Tate. The scheme $Z_{k+1}-Z_k$ is closed in $X-Z_k$, therefore, applying the Gysin triangle \ref{theo: Gysin} to this pair, we get a cofiber sequence
\[\M(X-Z_{k+1})\to\M(X-Z_k)\to \M(Z_{k+1}-Z_k)(n_k)[2n_k]\]
where $n_k$ denotes the codimension of $Z_{k+1}-Z_k$ in $X-Z_k$. It follows that $\M(X-Z_{k+1})$ is mixed Tate as desired.
\end{proof}

Now, we use the paper \cite{farbtopology}. The authors of this paper study the family of varieties $Poly_n^{d,m}$. They observe in the paragraph following \cite[Definition 1.1.]{farbtopology} that $Poly_1^{d,2}$ is isomorphic to $F_d$ and $Poly_2^{d,1}$ is isomorphic to $C_d$. 

\begin{lemm}\label{C and F are mixed Tate}
The motives $\M(Poly_n^{d,m})$ are mixed Tate motives. In particular, for all $d$, $\M(C_d)$ and $\M(F_d)$ are mixed Tate motives.
\end{lemm}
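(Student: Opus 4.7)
The plan is to apply Proposition \ref{prop: filtration} inductively on $d$, using a stratification of the ambient affine space $\A^{md}$ by the ``maximal $n$-th power common divisor'' of the tuple of polynomials. Concretely, for each $k$ with $0\leq k\leq \lfloor d/n\rfloor$, I would define $Z_k\subset \A^{md}$ to be the closed subscheme parametrizing tuples $(f_1,\ldots,f_m)$ for which there exists a monic polynomial $P$ of degree $k$ with $P^n\mid f_i$ for every $i$. Then $Z_0=\A^{md}$, $Z_{\lfloor d/n\rfloor+1}=\varnothing$, and the complement $\A^{md}\setminus Poly_n^{d,m}$ is precisely $Z_1$, because a bad point $\alpha$ of the tuple is by definition an $\alpha$ for which $(x-\alpha)^n$ divides every $f_i$.

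The first key step is to identify the locally closed strata: for $k\geq 1$, the ``factor out $P^n$'' morphism $(P,g_1,\ldots,g_m)\mapsto (P^n g_1,\ldots,P^n g_m)$ should give an isomorphism
\[\A^k\times Poly_n^{d-nk,m}\xrightarrow{\cong} Z_k\setminus Z_{k+1},\]
the condition $(g_1,\ldots,g_m)\in Poly_n^{d-nk,m}$ being exactly what forces $P$ to be the unique maximal monic $n$-th-power divisor of the tuple $(f_i)$. Granting this, I would induct on $d$: the base case $d<n$ is trivial since then the bad locus is empty and $Poly_n^{d,m}=\A^{md}$. For the inductive step, each stratum $Z_k\setminus Z_{k+1}\cong \A^k\times Poly_n^{d-nk,m}$ with $k\geq 1$ is smooth with mixed Tate motive: this uses the inductive hypothesis for $Poly_n^{d-nk,m}$ (since $d-nk<d$), the triviality $\M(\A^k)=\Z$, the K\"unneth isomorphism $\M(X\times Y)\cong \M(X)\otimes \M(Y)$ in $\DM(\Z)$ for smooth $X$ and $Y$, and the fact that $\DMT(\Z)$ is closed under tensor product. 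Applying Proposition \ref{prop: filtration} to $X=\A^{md}$ with the filtration $\varnothing\subset Z_{\lfloor d/n\rfloor}\subset\cdots\subset Z_2\subset Z_1$ of the bad locus then yields that $\M(\A^{md}\setminus Z_1)=\M(Poly_n^{d,m})$ is mixed Tate.

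The main obstacle is the scheme-theoretic verification over $\on{Spec}(\Z)$ that $Z_k$ is genuinely a closed subscheme and that the factorization map is an isomorphism of schemes onto $Z_k\setminus Z_{k+1}$. A clean way to define $Z_k$ is as the image of the finite morphism $\A^k\times \A^{m(d-nk)}\to \A^{md}$, $(P,(g_i))\mapsto (P^n g_i)$, or alternatively via the vanishing of appropriate subresultant-type determinants expressing the existence of such a $P$; the uniqueness of the maximal $P$ over the open stratum $Z_k\setminus Z_{k+1}$ then supplies the algebraic inverse. Because the stratification is defined purely through divisibility of polynomials, with no derivatives involved, it behaves uniformly across characteristics and introduces no additional subtlety over $\F_p$. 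Quite plausibly such a stratification, or an equivalent one, is already established in Farb--Wolfson \cite{farbtopology}, in which case the proof reduces to invoking their structural result together with Proposition \ref{prop: filtration}.
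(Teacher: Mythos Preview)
Your proposal is correct and follows essentially the same approach as the paper: the filtration $Z_k$ you describe is precisely the filtration $R_{n,k}^{d,m}$ of \cite[Equation~3.2]{farbtopology}, and your stratum identification $\A^k\times Poly_n^{d-nk,m}\cong Z_k\setminus Z_{k+1}$ is \cite[Equation~3.3]{farbtopology}, which the paper simply cites before invoking Proposition~\ref{prop: filtration} in exactly the inductive scheme you outline. Your final paragraph anticipates this correctly; the scheme-theoretic verification you flag as the main obstacle is indeed what is supplied by \cite{farbtopology}.
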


\begin{proof}
We proceed by induction on $d$. This is evident for $d=0$. Let $d\geq 1$. When $d\leq n$, the assertion follows from \cite[Equation 3.1.]{farbtopology}. In the other cases, as explained in \cite[Equation 3.2.]{farbtopology}, we have a descending filtration
\[\A^{dm}=R_{n,0}^{d,m}\supset R_{n,1}^{d,m}\supset R_{n,2}^{d,m}\supset\ldots\supset \varnothing\]
where each $R_{n,k}^{d,m}$ is a closed subscheme of $\A^{dm}$. By \cite[Equation 3.3.]{farbtopology}, we have an isomorphism
\[R_{n,k}^{d,m}-R_{n,k+1}^{d,m}\cong Poly_n^{d-kn,m}\times\A^k\]
Using our induction hypothesis and Proposition \ref{prop: filtration}, we find that $\M(Poly_n^{d,m})\cong \M(\A^{dm}-R_{n,1}^{d,m})$ is mixed Tate.
\end{proof}

As explained in \cite[Proposition 3.1.]{cazanavealgebraic}, the scheme $\bigsqcup_dF_d$ is equipped with a graded monoid structure that we will denote $\oplus$. Let us pick a $\Z$-point $u$ in $F_1$, we get a map $\alpha: F_d\to F_{d+1}$ as the composite
\[F_d\times\on{Spec}(\Z)\xrightarrow{\id\times u} F_d\times F_1\xrightarrow{-\oplus-} F_{d+1}\]

Similarly, identifying $\P^1(\C)$ with $S^2$, a choice of pinch map $\P^1(\C)\to \P^1(\C)\vee \P^1(\C)$ induces a multiplication
\[\Map_*(\P^1(\C),\P^1(\C))\times\Map_*(\P^1(\C),\P^1(\C))\to \Map_*(\P^1(\C),\P^1(\C))\]
which is additive with respect to the degree. The point $u$ in $F_1$ induces a point $v$ in $\Map_*(\P^1(\C),\P^1(\C))_1$ and we get a map
\[\beta:\Map_*(\P^1(\C),\P^1(\C))_d\to \Map_*(\P^1(\C),\P^1(\C))_{d+1}\]
constructed as before.

\begin{prop}\label{prop: commutative square up to homotopy}
The diagram
\[\xymatrix{
F_d(\C)\ar[d]\ar[r]^{\alpha}&F_{d+1}(\C)\ar[d]\\
\Map_*(\P^1(\C),\P^1(\C))_d\ar[r]_{\beta}&\Map_*(\P^1(\C),\P^1(\C))_{d+1}
}
\]
commutes in the homotopy category of spaces.
\end{prop}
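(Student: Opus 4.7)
My plan is to deduce the proposition from the stronger statement that the inclusion
\[
\iota\colon \bigsqcup_{d\geq 0} F_d(\C) \to \bigsqcup_{d\geq 0} \Map_*(\P^1(\C),\P^1(\C))_d
\]
is a morphism of H-spaces in the homotopy category of spaces, where the source is equipped with Cazanave's sum $\oplus$ from \cite{cazanavealgebraic} and the target with the loop product coming from the pinch map. Granting this compatibility, the square in the proposition is obtained by restricting the associated homotopy-commutative square
\[
\xymatrix{
F_d(\C)\times F_1(\C) \ar[d]_-{\iota\times\iota} \ar[r]^-{\oplus} & F_{d+1}(\C) \ar[d]^-{\iota} \\
\Map_*(\P^1(\C),\P^1(\C))_d \times \Map_*(\P^1(\C),\P^1(\C))_1 \ar[r] & \Map_*(\P^1(\C),\P^1(\C))_{d+1}
}
\]
to the slice $F_d(\C) \times \{u\}$, using that $u$ maps to $v$ under $\iota$.

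To establish this H-space compatibility, I would use a \emph{support-separation} homotopy in the style of Segal \cite{segalconfiguration}. Given $(r_1,r_2) \in F_{d_1}(\C)\times F_{d_2}(\C)$, the one-parameter family of affine automorphisms $\phi_t(x)=tx$ of $\A^1(\C)$ can be used to pull back the second factor so that its zeros and poles are concentrated near $\infty$ while those of $r_1$ stay fixed. Since both operations we wish to compare depend continuously on their inputs, we are reduced to the case where the supports (zeros and poles) of $r_1$ and $r_2$ lie in disjoint open disks $D_1,D_2\subset \P^1(\C)$. In this well-separated regime, choose a pinch map $\P^1(\C)\to \P^1(\C)\vee\P^1(\C)$ that collapses the complement of $D_1\sqcup D_2$; with this choice the pinch product is manifestly the map that equals $r_i$ on $D_i$ and is constant at the basepoint $1$ elsewhere. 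The same qualitative description applies to $\iota(r_1\oplus r_2)$: near $D_i$ the polynomial factors coming from $r_j$ ($j\neq i$) are close to their leading $x^{d_j}$ term and cancel in the ratio, giving approximately $r_i$; away from both $D_1$ and $D_2$ all factors are close to their leading term and the overall ratio is close to $1$.

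The main obstacle is upgrading this qualitative picture into a genuine homotopy between $\iota(r_1\oplus r_2)$ and the pinch product, which requires unpacking Cazanave's explicit algebraic formula for $\oplus$. However, the topological content is the same as in the classical comparison between algebraic models and loop space models of $\Omega^2 S^2$: both operations, in the well-separated limit, take the form ``$r_i$ near $D_i$, basepoint elsewhere''. A concrete interpolation can be built using scaling homotopies $r_i \mapsto r_i \circ \phi_{t}^{-1}$ combined with a one-parameter family of $\epsilon$-pinch maps collapsing progressively larger neighborhoods of the complement of $D_1\sqcup D_2$. Once the H-space compatibility is in place, the proposition follows immediately from the restriction argument of the first paragraph.
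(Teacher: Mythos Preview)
Your reduction to an H-space compatibility statement is sound and is essentially the same first step as in the paper: both approaches factor through the claim that $\iota$ respects the two multiplications up to homotopy, and then restrict to the slice $\{-\}\times\{u\}$.

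The gap is in the second step. Your support-separation heuristic treats $r_1\oplus r_2$ as if it were, in the well-separated regime, approximately equal to $r_i$ near $D_i$ and to the basepoint elsewhere. That description would be correct if $\oplus$ were the naive product of rational functions, but Cazanave's $\oplus$ is defined via the block sum of Bezout matrices (equivalently, concatenation of continued-fraction expansions), and the resulting numerator and denominator are \emph{not} simply the products of the numerators and denominators of the factors. So your sentence ``near $D_i$ the polynomial factors coming from $r_j$ are close to their leading $x^{d_j}$ term and cancel in the ratio'' is asserting something about $\oplus$ that you have not checked and that does not follow from the definition. You acknowledge this yourself (``the main obstacle is upgrading this qualitative picture\ldots''), but then appeal to an analogy with classical scanning rather than carrying out the computation. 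As written, this is a plausibility argument, not a proof.

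The paper bypasses this analytic difficulty entirely. Cazanave himself proves, in \cite[Remark A.7]{cazanavealgebraic}, that the monoid structure $\oplus$ on $\bigsqcup_d F_d$ is compatible with the H-space structure on $\Omega^{\P^1}(\P^1)$ coming from the \emph{motivic} pinch map $\P^1\to\P^1\vee\P^1$, and this compatibility holds already in $\MS(\C)$. The paper then simply takes complex points (a functor preserving weak equivalences and products), observes that the complex points of the motivic pinch map are homotopic to the standard topological pinch, and glues the resulting squares. In short: the hard compatibility you are trying to establish by hand is already a theorem of Cazanave in the motivic category, and the paper just imports it.
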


\begin{proof}
The scheme over $\C$, $M:=(\bigsqcup_dF_d)\times\on{Spec}(\C)$ can be seen as a graded monoid in the category $\MS(\C)$ of motivic spaces over $\C$. Let $\tilde{\P}^1$ be a fibrant replacement of $\P^1$ in the model category $\MS(\C)_*$ of pointed objects of $\MS(\C)$. We can form the object
\[\Omega^{\P^1}(\P^1):=\underline{\Map}_*(\P^1,\tilde{\P}^1 )\]
where $\underline{\Map}_*$ denotes the inner Hom in the category $\MS(\C)_*$. The object $\Omega^{\P^1}(\P^1)$ splits as a disjoint union of components indexed by the degree. We denote by $\Omega_d^{\P^1}(\P^1)$ the component of degree $d$. There is a pinch map $\P^1\to \P^1\vee \P^1$ in $\MS(\Z)$ (see \cite[Definition A.2.]{cazanavealgebraic}). It follows that $\Omega^{\P^1}(\P^1)$ has a binary multiplication. Moreover, this multiplication is additive with respect to the degree, we can thus form a map $\gamma:\Omega_d^{\P^1}(\P^1)\to\Omega^{\P^1}_{d+1}(\P^1)$ as the composite
\[\Omega_d^{\P^1}(\P^1)\times\ast\xrightarrow{\id\times \ast}\Omega_d^{\P^1}(\P^1)\times\Omega_1^{\P^1}(\P^1)\to\Omega^{\P^1}_{d+1}(\P^1)\]
where the base point $\ast$ in $\Omega_1^{\P^1}(\P^1)$ is induced from the chosen point in $F_1$ via the composite
\[F_1\to\underline{\Map}_*(\P^1,\P^1)\to\underline{\Map}_*(\P^1,\tilde{\P}^1)\]

Using \cite[Remark A.7.]{cazanavealgebraic} and the fact that taking complex points is a weak equivalence preserving functor from $\MS(\Z)$ to topological spaces, we find that the diagram
\[
\xymatrix{
F_d(\C)\ar[d]\ar[r]^\alpha&F_{d+1}(\C)\ar[d]\\
\Omega^{\P^1}_d(\P^1)(\C)\ar[r]_\gamma&\Omega^{\P^1}_{d+1}(\P^1)(\C)
}
\]
commutes in the homotopy category of spaces. 

By adjunction, we have a map of topological spaces
\[j:\Omega^{\P^1}(\P^1)(\C)\to \Map_*(\P^1(\C),\tilde{\P}^1(\C))\]
If we give the right hand side, the multiplication coming from the pinch map on $\P^1(\C)$ obtained by taking the complex points of the pinch map $\P^1\to \P^1\vee \P^1$, the map $j$ commutes with multiplication on the nose. We thus get a commutative square
\[\xymatrix{
\Omega^{\P^1}_d(\P^1)(\C)\ar[r]^\gamma\ar[d]_j&\Omega^{\P^1}_{d+1}(\P^1)(\C)\ar[d]^j\\
\Map_*(\P^1(\C),\tilde{\P}^1(\C))_d\ar[r]_{\beta'}&\Map_*(\P^1(\C),\tilde{\P}^1(\C))_{d+1}
}
\]
where $\beta'$ is defined as the composite
\[\id\times v:\Map_*(\P^1(\C),\tilde{\P}^1(\C))_d\to\Map_*(\P^1(\C),\tilde{\P}^1(\C))_d\times \Map_*(\P^1(\C),\tilde{\P}^1(\C))_1\] 
and the multiplication map
\[\Map_*(\P^1(\C),\tilde{\P}^1(\C))_d\times \Map_*(\P^1(\C),\tilde{\P}^1(\C))_1\to\Map_*(\P^1(\C),\tilde{\P}^1(\C))_{d+1}\]

The motivic space $\tilde{\P}^1$ is weakly equivalent to $\P^1$ which implies that their complex points are weakly equivalent as topological spaces. Moreover, the motivic pinch map $\P^1\to\P^1\vee\P^1$ induces a map which is homotopic to the standard pinch map $\P^1(\C)\to\P^1(\C)\vee\P^1(\C)$. It follows that the map $\beta'$ represents the same map as the map $\beta$ in the homotopy category of spaces. Therefore, we get the desired result by gluing together the above two commutative squares.
\end{proof}

\begin{prop}\label{prop: stability for F}
The maps $\alpha:F_d\to F_{d+1}$ induce isomorphisms
\[t_{<d}\M(\alpha):t_{<d}\M(F_d)\to t_{<d}\M(F_{d+1})\]
\end{prop}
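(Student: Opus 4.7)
The plan is to reduce the assertion to a statement about singular homology of complex points via Betti realization, and then to invoke Segal's theorem on the topology of spaces of rational functions. By Corollary \ref{coro: B detects the t-structure} it suffices to verify that $t_{<d}B(\M(\alpha))$ is an isomorphism in $\D(\Z)$. Proposition \ref{prop: Betti realization on smooth schemes} identifies the homology of $B(\M(F_d))$ with the singular homology $\H_*(F_d(\C),\Z)$, and under this identification $B(\M(\alpha))$ becomes the map induced on singular chains by the continuous stabilization $\alpha(\C):F_d(\C)\to F_{d+1}(\C)$. The problem therefore becomes: show that $\alpha(\C)_{*}$ is an isomorphism on $\H_i(-,\Z)$ for every $i<d$.

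To prove this I would compare $\alpha(\C)$ with the corresponding stabilization $\beta$ on mapping spaces, using the homotopy commutative square of Proposition \ref{prop: commutative square up to homotopy}. The bottom horizontal map $\beta$ is a weak equivalence: the space $\Map_*(\P^1(\C),\P^1(\C))$ is homotopy equivalent to $\Omega^2 S^2$, and since $\Omega^2 S^2$ is grouplike, translation by a chosen element of degree $1$ is a self-equivalence of $\Omega^2 S^2$ that carries the component of degree $d$ to the component of degree $d+1$. The vertical maps are the inclusions of rational functions into the corresponding full mapping spaces; the main theorem of Segal's paper on the topology of spaces of rational functions asserts that these inclusions induce isomorphisms on $\H_i(-,\Z)$ for $i<d$ (respectively $i<d+1$), which contains the range $i<d$ that I need.

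Chasing the commutative square, the two vertical maps and the bottom horizontal map all being $\H_i$-isomorphisms for $i<d$ forces $\alpha(\C)_{*}$ to be an $\H_i$-isomorphism in the same range. Translating back through Proposition \ref{prop: Betti realization on smooth schemes} and Corollary \ref{coro: B detects the t-structure}, this yields the desired isomorphism $t_{<d}\M(\alpha)$. The one step that requires care is matching the stability range coming from Segal's theorem with the bound $i<d$ appearing in the statement; this match is exactly what Segal's original formulation provides, so nothing extra is needed. Everything else is a formal combination of the Betti-realization results of Section \ref{section: mixed motives} with the homotopical comparison already established in Proposition \ref{prop: commutative square up to homotopy}.
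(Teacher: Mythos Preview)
Your proof is correct and follows essentially the same route as the paper's own argument: reduce via Corollary~\ref{coro: B detects the t-structure} to the Betti realization, use the homotopy-commutative square of Proposition~\ref{prop: commutative square up to homotopy}, observe that $\beta$ is a weak equivalence because the mapping space is a loop space, and invoke Segal's theorem \cite[Proposition~1.1]{segaltopology} for the vertical maps. The paper's proof is slightly terser but logically identical.
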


\begin{proof}
By Corollary \ref{coro: B detects the t-structure}, it suffices to prove that $B\M(\alpha)$ is an isomorphism in homology in degree smaller than $d$. By Proposition \ref{prop: commutative square up to homotopy}, the following square commutes up to homotopy.
\[
\xymatrix{
F_d(\C)\ar[r]^{\alpha}\ar[d]_{i_d}&F_{d+1}(\C)\ar[d]^{i_{d+1}}\\
\Map_*(\P^1(\C),\P^1(\C))_d\ar[r]_{\beta}&\Map_*(\P^1(\C),\P^1(\C))_{d+1}
}
\]
where the maps $i_d$ and $i_{d+1}$ are the obvious inclusions. According to \cite[Proposition 1.1.]{segaltopology}, the maps $i_d$ and $i_{d+1}$ induce isomorphisms in homology up to degree $d$, on the other hand, the map $\beta$ is a homotopy equivalence since $\Map_*(\P^1(\C),\P^1(\C))$ is a loop space. It follows that the map $B\M(\alpha)$ induces an isomorphism in homology in degree smaller than $d$.
\end{proof}

We are now ready to prove our main theorem. We denote by $l(d)$ the function $\on{min}(d,\lfloor{d/2}\rfloor+2)$. Note that for $d\geq 3$ this coincides with $\lfloor{d/2}\rfloor+2$.

\begin{theo}\label{theo: main bis}
The sequence of motives $\M(C_d)$ has homological stability with slope $l(d)$.
\end{theo}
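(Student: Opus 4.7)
The plan is to apply Lemma \ref{lemm: transfer of homological stability} to the sequences $\{\M(C_d)\}$ and $\{\M(F_d)\}$, using the scanning maps $s_d : C_d \to F_d$ as the comparison maps $j_d$. Homological stability for $\{\M(F_d)\}$ with slope $d$ is already provided by Proposition \ref{prop: stability for F} (the maps $\alpha$ furnish the required isomorphisms on truncations), and both $\M(C_d)$ and $\M(F_d)$ are mixed Tate by Lemma \ref{C and F are mixed Tate}, so the lemma makes sense. Once we verify that $t_{< l(d)}\M(s_d)$ is an isomorphism, the conclusion will follow with slope $\min(d, \lfloor d/2\rfloor+2) = l(d)$.

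The crucial step is therefore to prove that $t_{<l(d)}\M(s_d)$ is an isomorphism in $\DMT(\Z)$. By Corollary \ref{coro: B detects the t-structure} this reduces to checking that $t_{<l(d)}B(\M(s_d))$ is an isomorphism in $\D(\Z)$, i.e., that the map $s_d(\C) : C_d(\C) \to F_d(\C)$ is a homology isomorphism in degrees $< l(d)$, using Proposition \ref{prop: Betti realization on smooth schemes} to identify $B\M$ with singular chains on complex points.

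To establish the topological statement I would factor $s_d(\C)$ through Segal's scanning map as
\[
C_d(\C) \xrightarrow{s_d(\C)} F_d(\C) \xrightarrow{i_d} \Map_*(\P^1(\C),\P^1(\C))_d \simeq \Omega^2_d S^2.
\]
The composite is (homotopic to) the topological scanning map from the configuration space of the plane to $\Omega^2_d S^2$, which by Segal's theorem \cite{segalconfiguration} is a homology equivalence in degrees $\leq \lfloor d/2\rfloor+1$, i.e.\ in degrees $< \lfloor d/2\rfloor+2$. The second map $i_d$ is a homology equivalence in degrees $< d$ by \cite[Proposition 1.1.]{segaltopology} (this is exactly the input already used in the proof of Proposition \ref{prop: stability for F}). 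By the standard two-out-of-three argument on the long exact sequences of the mapping cone, $s_d(\C)$ is a homology isomorphism in the minimum of these two ranges, which is precisely $< l(d)$.

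The main obstacle is really the topological comparison above — all of the heavy motivic lifting (realization compatibility with the t-structure, mixed Tate property of both sides, and stability for $F_d$) has been arranged in the previous sections, so the proof becomes a short application of Corollary \ref{coro: B detects the t-structure} together with Lemma \ref{lemm: transfer of homological stability}. The only minor care needed is matching the two-out-of-three range carefully with the definition of $l(d)$, and noting that for small $d$ (where $l(d)=d$) the bound coming from stability of $F_d$ is the tighter one, so the final slope is exactly $\min(d,\lfloor d/2\rfloor+2)$.
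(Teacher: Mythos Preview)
Your proposal is correct and follows essentially the same route as the paper: reduce via Lemma \ref{lemm: transfer of homological stability} and Corollary \ref{coro: B detects the t-structure} to the topological statement, then factor $s_d(\C)$ through $\Map_*(\P^1(\C),\P^1(\C))_d$ and invoke Segal's two results to conclude by two-out-of-three. The paper's argument is identical in substance, only stated slightly more tersely.
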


\begin{proof}
By Lemma \ref{lemm: transfer of homological stability}, it suffices to prove that the scanning maps $s_d:C_d\to F_d$ induce an isomorphism
\[t_{<l(d)}\M(s_d):t_{<l(d)}\M(C_d)\to t_{<l(d)}\M(F_d)\]
By Corollary \ref{coro: B detects the t-structure}, we are reduced to proving that $B\M(s_d):B(\M(C_d))\to B(\M(F_d))$ induces an isomorphism in homology in degree $< l(d)$. By Proposition \ref{prop: Betti realization on smooth schemes}, it suffices to prove that the map
\[\H_*(C_d(\C),\Z)\to\H_*(F_d(\C),\Z)\]
induce an isomorphism for $*<l(d)$. We have a sequence of maps
\[H_*(C_d(\C),\Z)\to H_*(F_d(\C),\Z)\to H_*(\Map_*(\P^1(\C),\P^1(\C))_d,\Z)\]
The composite of these two maps is an isomorphism in homology in degree $<l(d)$ because we are in the stable range for $\H_*(C_d(\C),\Z)$. The second map induce an isomorphism for $*<l(d)$ (and even for $*<d$) by \cite[Theorem 3]{segalconfiguration}.
\end{proof}

Theorem \ref{theo: main} now follows easily.

\begin{coro}\label{coro: proof of main}
There is an isomorphism
\[\H^i(C_d,\Z(q))\cong\H^i(C_{d+1},\Z(q)) \]
for any $q$ and $i<l(d)$.
\end{coro}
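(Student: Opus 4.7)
The plan is straightforward and essentially formal, once Theorem \ref{theo: main bis} is granted. First, I would unravel Definition \ref{defi: motivic cohomology}: since $\M(C_d)$ is mixed Tate by Lemma \ref{C and F are mixed Tate} and $\DMT(\Z)$ is a full triangulated subcategory of $\DM(\Z)$, we have
\[\H^i(C_d,\Z(q)) = \DM(\Z)(\M(C_d),\Z(q)[i]) = \DMT(\Z)(\M(C_d),\Z(q)[i]).\]

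Next I would apply the contravariant functor $\DMT(\Z)(-,\Z(q)[i])$ to the truncation triangle
\[t_{\geq l(d)}\M(C_d)\to \M(C_d)\to t_{<l(d)}\M(C_d).\]
The key observation is that $\Z(q)[i]$ lies in $\DMT(\Z)_{<l(d)}$ as soon as $i<l(d)$: the unique nonzero weight-graded piece of $\Z(q)[i]$ is $\Z[i]$ in weight $-q$, whose homology is concentrated in degree $i$, hence belongs to $\D(\Z)_{<l(d)}$. The same reasoning shows $\Z(q)[i-1]\in \DMT(\Z)_{<l(d)}$. By the orthogonality axiom of the $t$-structure, both of the terms
\[\DMT(\Z)(t_{\geq l(d)}\M(C_d),\Z(q)[i]) \quad \text{and} \quad \DMT(\Z)(t_{\geq l(d)}\M(C_d),\Z(q)[i-1])\]
appearing in the associated long exact sequence vanish, yielding a natural isomorphism
\[\H^i(C_d,\Z(q))\cong \DMT(\Z)(t_{<l(d)}\M(C_d),\Z(q)[i]).\]

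Finally, Theorem \ref{theo: main bis} supplies an isomorphism $t_{<l(d)}\M(C_d)\cong t_{<l(d)}\M(C_{d+1})$, and the identical argument applied to $\M(C_{d+1})$ (using the same truncation level $l(d)$) identifies the right-hand side of the previous display with $\H^i(C_{d+1},\Z(q))$, completing the chain of isomorphisms.

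There is essentially no obstacle here: the entire mathematical content is concentrated in Theorem \ref{theo: main bis}, and this corollary is a purely formal consequence of the compatibility of the representable functor $\DMT(\Z)(-,\Z(q)[i])$ with the $t$-structure. The only routine verification is that the Tate-shifted objects $\Z(q)[i]$ and $\Z(q)[i-1]$ lie in $\DMT(\Z)_{<l(d)}$ when $i<l(d)$, which is immediate from the definition of $\DMT(\Z)_{<0}$ in terms of weight-graded pieces.
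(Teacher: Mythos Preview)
Your proposal is correct and follows essentially the same route as the paper. The only cosmetic difference is that the paper invokes the adjunction property of $t_{<l(d)}$ (as left adjoint to the inclusion $\DMT(\Z)_{<l(d)}\hookrightarrow\DMT(\Z)$) directly, whereas you obtain the same factorization by running the long exact sequence on the truncation triangle and using orthogonality; these are equivalent formal manoeuvres, and the paper likewise checks that $\Z(q)[i]\in\DMT(\Z)_{<l(d)}$ for $i<l(d)$ and cites the proof of Theorem~\ref{theo: main bis} (via the zig-zag through $\M(F_d)$ and $\M(F_{d+1})$) for the isomorphism of truncations.
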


\begin{proof}
We have a zig-zag of maps
\[\M(C_d)\xrightarrow{\M(s_d)}\M(F_d)\xrightarrow{\M(\alpha)} \M(F_{d+1})\xleftarrow{\M(s_{d+1})}\M(C_{d+1})\]

After hitting this zig-zag with $t_{<l(d)}$, all the maps become isomorphisms by the proof of the previous theorem. As explained in Remark \ref{rem: truncation functors}, the functor $t_{<l(d)}$ is the left adjoint to the inclusion $\DMT(\Z)_{<l(d)}\to\DMT(\Z)$. Moreover for $i<l(d)$, the object  $\Z(q)[i]$ is in $\DMT(\Z)_{<l(d)}$. It follows that each of the map in the above zig-zag induce an isomorphism when we apply the functor $\DMT(\Z)(-,\Z(q)[i])$.
\end{proof}

We denote by $F_\infty$ the homotopy colimit, in the model category $\MS(\Z)$, of the diagram
\[F_1\xrightarrow{\alpha}F_2\xrightarrow{\alpha}\ldots\]
where $\alpha:F_d\to F_{d+1}$ is the map constructed above. 

\begin{prop}\label{prop: stable homology}
The composite $C_d\to F_d\to F_{\infty}$ induces an isomorphism 
\[\H^i(F_\infty,\Z(q))\to\H^i(C_d,\Z(q))\]
for any $q$ and $i<l(d)$.
\end{prop}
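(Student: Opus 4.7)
The plan is to factor the map $\H^i(F_\infty, \Z(q)) \to \H^i(C_d, \Z(q))$ through $\H^i(F_d, \Z(q))$ as
\[\H^i(F_\infty, \Z(q)) \to \H^i(F_d, \Z(q)) \to \H^i(C_d, \Z(q)),\]
where the first arrow is induced by the natural map $F_d \to F_\infty$ into the homotopy colimit and the second by the scanning map $s_d$. The proof of Corollary \ref{coro: proof of main} already shows that the second arrow is an isomorphism for $i<l(d)$, so the entire content of the proposition lies in controlling the first arrow.

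For the first arrow, I would exploit the fact that $F_\infty$ is \emph{by construction} a homotopy colimit in $\MS(\Z)$, together with the observation that the motive functor $\M$ extends from $\cat{Sm}_\Z$ to a colimit-preserving functor $\mathrm{Ho}(\MS(\Z))\to\DM(\Z)$ (in Spitzweck's setup it factors through Yoneda, $\P^1$-suspension and smashing with $\MZ$, each of which is a left adjoint). This gives $\M(F_\infty)\cong \hocolim_n \M(F_n)$ in $\DM(\Z)$. The standard Milnor short exact sequence for a sequential homotopy colimit in a triangulated category with countable coproducts then supplies
\[0 \to \lim{}^1_n\, \H^{i-1}(F_n, \Z(q)) \to \H^i(F_\infty, \Z(q)) \to \lim_n\, \H^i(F_n, \Z(q)) \to 0,\]
where the inverse systems are formed by the maps induced by $\alpha\colon F_n\to F_{n+1}$.

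To handle both limit terms I would invoke Proposition \ref{prop: stability for F}: the map $\M(\alpha)$ becomes an isomorphism after $t_{<n}$, and since $\Z(q)[j]\in\DMT(\Z)_{<j+1}$, the induced map $\H^j(F_{n+1},\Z(q))\to \H^j(F_n,\Z(q))$ is an isomorphism whenever $j<n$. For $i<l(d)\le d$, specialising to $j=i$ and $j=i-1$, both inverse systems stabilise from index $d$ onward, so $\lim{}^1$ vanishes and the projection identifies $\H^i(F_\infty,\Z(q))$ with $\H^i(F_d,\Z(q))$ via the map induced by $F_d\to F_\infty$, as desired. Concatenating with the scanning isomorphism gives the result.

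The main technical point I expect to need some care is the identification $\M(F_\infty)\cong \hocolim_n \M(F_n)$ and the legitimacy of the Milnor sequence in $\DM(\Z)$; both are formal consequences of $\DM(\Z)$ being the homotopy category of a stable presentable model category together with the fact that $\M$ is (up to a left adjoint composition) colimit-preserving, so once the right foundational setup is invoked the argument is routine.
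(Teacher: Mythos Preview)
Your proposal is correct and follows essentially the same two-step factorization as the paper: first show that $F_d\to F_\infty$ induces an isomorphism on $\H^i(-,\Z(q))$ for $i<d$, then use the scanning map to pass from $F_d$ to $C_d$ for $i<l(d)$. The paper handles the first step more tersely, citing Proposition~\ref{prop: stability for F} together with ``an argument analogous to the previous corollary'' rather than writing out the Milnor sequence, but your version is a legitimate way to make that step precise.
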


\begin{proof}
By Proposition \ref{prop: stability for F} and an argument analogous to the previous corollary, we see that the map $F_d\to F_\infty$ induces an isomorphism
\[\H^i(F_\infty,\Z(q))\to\H^i(F_d,\Z(q))\]
for $i<d$ and any $q$. In the proof of Theorem \ref{theo: main bis}, we prove that the map
\[t_{<l(d)}\M(s_d):t_{<l(d)}\M(C_d)\to t_{<l(d)}\M(F_d)\]
is an isomorphism which again by the method of the previous corollary implies that the map
\[\H^i(F_d,\Z(q))\to\H^i(C_d,\Z(q))\]
is an isomorphism for $i<l(d)$. These two fact together imply the desired result.
\end{proof}

Note that the functor from smooth schemes over $\Z$ to topological spaces $X\mapsto X(\C)$ extends uniquely to a homotopy colimit preserving functor $\MS(\Z)\to \S$. We can thus make sense of the topological space $F_\infty(\C)$ and it is equivalent to the homotopy colimit of the diagram
\[F_1(\C)\xrightarrow{\alpha(\C)}F_2(\C)\xrightarrow{\alpha(\C)}
\ldots \xrightarrow{\alpha(\C)}F_n(\C)\xrightarrow{\alpha(\C)}\ldots\]

\begin{prop}\label{prop: Finfty=Omega0}
There is a weak equivalence $F_\infty(\C)\simeq \Omega_0^2S^2$
\end{prop}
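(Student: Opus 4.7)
The plan is to exhibit a map $F_\infty(\C) \to \Omega_0^2 S^2$ and prove it is a weak equivalence by showing it is a homology isomorphism between simple spaces. Starting from the homotopy-commutative squares of Proposition \ref{prop: commutative square up to homotopy}, I would first rectify the ladder
\[
\xymatrix{
F_d(\C)\ar[d]_{i_d}\ar[r]^{\alpha(\C)}&F_{d+1}(\C)\ar[d]^{i_{d+1}}\ar[r]^{\alpha(\C)}&\ldots\\
\Omega^2S^2_d\ar[r]_{\beta}&\Omega^2S^2_{d+1}\ar[r]_{\beta}&\ldots
}
\]
to a strictly commutative one (for instance by passing to mapping telescopes), yielding after sequential $\hocolim$ a map $\Phi: F_\infty(\C) \to \hocolim_d \Omega^2 S^2_d$. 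Since the multiplication on $\Map_*(\P^1(\C),\P^1(\C))$ induced by the pinch map makes it a grouplike H-space (its $\pi_0=\Z$ is a group), the translation $\beta$ is a homotopy equivalence between components. Hence $\hocolim_d \Omega^2 S^2_d \simeq \Omega^2 S^2_1 \simeq \Omega_0^2 S^2$, producing the desired $\Phi: F_\infty(\C) \to \Omega_0^2 S^2$.

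Next, I would check that $\Phi$ is an isomorphism on integral singular homology. By Segal's theorem for rational functions (\cite[Proposition 1.1.]{segaltopology}), each scanning map $i_d$ induces an isomorphism on $H_j$ for $j < d$. Since singular homology commutes with sequential homotopy colimits and $\beta$ is a homotopy equivalence, fixing a degree $j$ and passing to the colimit (where eventually $d > j$) yields the isomorphism $H_j(F_\infty(\C)) \xrightarrow{\cong} H_j(\Omega_0^2 S^2)$.

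To upgrade this homology isomorphism to a weak equivalence, I would argue that both spaces are simple, so Whitehead's theorem for nilpotent spaces applies. The target is a connected loop space and therefore simple. For the source, the graded monoid operation $\oplus$ on $\bigsqcup_d F_d(\C)$ from \cite{cazanavealgebraic}, together with its homotopy-commutativity, descends to a homotopy-associative and homotopy-commutative product on $F_\infty(\C)$ (each $F_d(\C)$ is the complement of a hypersurface in $\A^{2d}(\C)$ and thus connected, so $F_\infty(\C)$ is connected too). Any connected H-space is simple, so Whitehead's theorem in the nilpotent setting concludes the proof.

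The main obstacle is the last step: showing that the colimit genuinely inherits a well-defined H-space structure from the graded monoid $\bigsqcup_d F_d(\C)$. The subtlety is that stabilizing one factor by $u$ transforms $x \oplus y$ into $x \oplus u \oplus y$ rather than $x \oplus y \oplus u$, so one needs the homotopy-commutativity of $\oplus$ to identify these two stabilizations in the telescope. A cleaner alternative would be to apply Segal's group completion theorem directly to the topological monoid $\bigsqcup_d F_d(\C)$, identifying its homotopy-theoretic group completion with $\Omega B(\bigsqcup_d F_d(\C)) \simeq \Z \times \Omega_0^2 S^2$ and thereby bypassing the need to verify simplicity by hand.
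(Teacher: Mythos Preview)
Your argument is correct in outline, and the obstacle you flag is real but ultimately avoidable. The paper's proof follows the same ladder diagram and the same identification $\hocolim_d \Omega^2 S^2_d \simeq \Omega_0^2 S^2$, but it bypasses your entire third step. The point is that Segal's result \cite[Proposition 1.1.]{segaltopology} asserts that $i_d:F_d(\C)\to\Map_*(\P^1(\C),\P^1(\C))_d$ is a homotopy equivalence up to dimension~$d$, i.e.\ it induces isomorphisms on \emph{homotopy} groups in a range tending to infinity, not merely on homology. Passing to the sequential homotopy colimit therefore gives a map inducing isomorphisms on all homotopy groups, hence a weak equivalence outright. No Whitehead argument, no simplicity, no H-space structure on $F_\infty(\C)$, and no group-completion detour are needed.

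What your approach buys is a template that would still work in situations where one only has homology control on the approximating maps; the price is exactly the obstacle you identify, namely producing an H-space structure on the telescope compatible with the stabilisation. Here that price is unnecessary: you already cite the reference that gives you homotopy isomorphisms, so you may as well use them.
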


\begin{proof}
The homotopy colimit diagram defining $F_\infty(\C)$ maps to a similar diagram
\[\Map_*(\P^1(\C),\P^1(\C))_1\to\Map_*(\P^1(\C),\P^1(\C))_2\to
\ldots \to\Map_*(\P^1(\C),\P^1(\C))_n\ldots\]
in which all the maps are weak equivalences. Moreover by \cite[Proposition 1.1.]{segaltopology}, the map $F_n(\C)\to \Map_*(\P^1(\C),\P^1(\C))_n$ induces an isomorphisms in a range of homotopy groups that grows to $\infty$ with $n$. It follows that the space $F_\infty(\C)$ is weakly equivalent to $\Map_*(\P^1(\C),\P^1(\C))_0$.
\end{proof}

\bibliographystyle{alpha}

\bibliography{biblio}

\begin{thebibliography}{MVW06}

\bibitem[Arn70]{arnoldtopological}
V.~I. Arnold.
\newblock Certain topological invariants of algebraic functions.
\newblock {\em Trudy Moskov. Mat. Ob\v s\v c.}, 21:27--46, 1970.

\bibitem[BM14]{benderskylocalization}
M.~Bendersky and J.~Miller.
\newblock Localization and homological stability of configuration spaces.
\newblock {\em Q. J. Math.}, 65(3):807--815, 2014.

\bibitem[Caz12]{cazanavealgebraic}
C.~Cazanave.
\newblock Algebraic homotopy classes of rational functions.
\newblock {\em Ann. Sci. \'Ec. Norm. Sup\'er. (4)}, 45(4):511--534, 2012.

\bibitem[CD09]{cisinskitriangulated}
D.-C. Cisinski and F.~D{\'e}glise.
\newblock Triangulated categories of mixed motives.
\newblock {\em arXiv preprint arXiv:0912.2110}, 2009.

\bibitem[CP15]{canterohomological}
F.~Cantero and M.~Palmer.
\newblock On homological stability for configuration spaces on closed
  background manifolds.
\newblock {\em Documenta Math.}, 20:753--805, 2015.

\bibitem[D{\'e}g08]{deglisearound}
F.~D{\'e}glise.
\newblock Around the {G}ysin triangle. {II}.
\newblock {\em Doc. Math.}, 13:613--675, 2008.

\bibitem[Fre]{fressehomotopy}
B.~Fresse.
\newblock Homotopy of operads and {G}rothendieck-{T}eichm{\"u}ller groups.
\newblock {\em Research monograph in preparation, available at
  \url{http://math.univ-lille1.fr/~fresse/OperadHomotopyBook/}}.

\bibitem[FW15]{farbtopology}
B.~Farb and J.~Wolfson.
\newblock Topology and arithmetic of resultants, {I}: spaces of rational maps.
\newblock {\em arXiv preprint arXiv:1506.02713}, 2015.

\bibitem[GL00]{geisserktheory}
T.~Geisser and M.~Levine.
\newblock The {$K$}-theory of fields in characteristic {$p$}.
\newblock {\em Invent. Math.}, 139(3):459--493, 2000.

\bibitem[GM03]{gelfandmethods}
S.~Gelfand and Y.~Manin.
\newblock {\em Methods of homological algebra}.
\newblock Springer Monographs in Mathematics. Springer-Verlag, Berlin, second
  edition, 2003.

\bibitem[Hor15]{horelprofinite}
G.~Horel.
\newblock Profinite completion of operads and the
  {G}rothendieck-{T}eichm{\"u}ller group.
\newblock {\em arXiv preprint :1504.01605}, 2015.

\bibitem[Kah00]{kahnweight}
B.~Kahn.
\newblock Weight filtration and mixed tate motives.
\newblock In {\em Actes du Colloque de géométrie algébrique de Kinosaki},
  2000.

\bibitem[Kah05]{kahnalgebraic}
B.~Kahn.
\newblock Algebraic k-theory, algebraic cycles and arithmetic geometry.
\newblock In {\em Handbook of K-theory}, pages 351--428. Springer, 2005.

\bibitem[Lan02]{langalgebra}
S.~Lang.
\newblock {\em Algebra}, volume 211 of {\em Graduate Texts in Mathematics}.
\newblock Springer-Verlag, New York, third edition, 2002.

\bibitem[Lev93]{levinetate}
M.~Levine.
\newblock Tate motives and the vanishing conjectures for algebraic k-theory.
\newblock In {\em Algebraic K-theory and algebraic topology}, pages 167--188.
  Springer, 1993.

\bibitem[Lev04]{levinektheory}
M.~Levine.
\newblock K-theory and motivic cohomology of schemes, {I}.
\newblock {\em K-theory archive}, 2004.

\bibitem[LW13]{lecomterealisation}
F.~Lecomte and N.~Wach.
\newblock R\'ealisation de {H}odge des motifs de {V}oevodsky.
\newblock {\em Manuscripta Math.}, 141(3-4):663--697, 2013.

\bibitem[McD75]{mcduffconfiguration}
D.~McDuff.
\newblock Configuration spaces of positive and negative particles.
\newblock {\em Topology}, 14:91--107, 1975.

\bibitem[MVW06]{mazzalecture}
C.~Mazza, V.~Voevodsky, and C.~Weibel.
\newblock {\em Lecture notes on motivic cohomology}, volume~2 of {\em Clay
  Mathematics Monographs}.
\newblock American Mathematical Society, Providence, RI; Clay Mathematics
  Institute, Cambridge, MA, 2006.

\bibitem[R{\O}08]{rondigsmodules}
O.~R{\"o}ndigs and P.-A. {\O}stv{\ae}r.
\newblock Modules over motivic cohomology.
\newblock {\em Adv. Math.}, 219(2):689--727, 2008.

\bibitem[Seg73]{segalconfiguration}
G.~Segal.
\newblock Configuration-spaces and iterated loop-spaces.
\newblock {\em Invent. Math.}, 21:213--221, 1973.

\bibitem[Seg79]{segaltopology}
G.~Segal.
\newblock The topology of spaces of rational functions.
\newblock {\em Acta Math.}, 143(1-2):39--72, 1979.

\bibitem[Sou15]{souderesmotivic}
I.~Soud{\`e}res.
\newblock A motivic {G}rothendieck--{T}eichm{\"u}ller group.
\newblock {\em arXiv preprint arXiv:1502.05640}, 2015.

\bibitem[Spi12]{spitzweckcommutative}
M.~Spitzweck.
\newblock A commutative {$\P^1$}-spectrum representing motivic cohomology over
  {D}edekind domains.
\newblock {\em arXiv preprint arXiv:1207.4078}, 2012.

\bibitem[Voe11]{voevodskymotivic}
V.~Voevodsky.
\newblock On motivic cohomology with {$\bold Z/l$}-coefficients.
\newblock {\em Ann. of Math. (2)}, 174(1):401--438, 2011.

\end{thebibliography}

\end{document}